\begin{document}
\parskip=6pt

\theoremstyle{plain}

\newtheorem {thm}{Theorem}[section]
\newtheorem {lem}[thm]{Lemma}
\newtheorem {cor}[thm]{Corollary}
\newtheorem {defn}[thm]{Definition}
\newtheorem {prop}[thm]{Proposition}
\numberwithin{equation}{section}
\def\cal{\mathcal}
\newcommand{\cF}{\cal F}
\newcommand{\cA}{\cal A}
\newcommand{\cC}{\cal C}
\newcommand{\cO}{{\cal O}}
\newcommand{\cE}{{\cal E}}
\newcommand{\cU}{{\cal U}}
\newcommand{\cM}{{\cal M}}
\newcommand{\cD}{{\cal D}}
\newcommand{\cK}{{\cal K}}
\newcommand{\bC}{\mathbb C}
\newcommand{\bP}{\mathbb P}
\newcommand{\bN}{\mathbb N}
\newcommand{\bA}{\mathbb A}
\newcommand{\bR}{\mathbb R}
\newcommand{\lf}{\lfloor}
\newcommand{\rf}{\rfloor}
\newcommand{\var}{\varepsilon}
\newcommand{\End}{\text{End}}
\newcommand{\GL}{\text{GL}}
\newcommand{\loc}{\text{loc}}
\newcommand{\dist}{\text{dist}}
\newcommand{\op}{\text{op}}
\newcommand{\Id}{\text{Id}}
\newcommand{\weak}{\text{weak}}
\newcommand{\Hom}{\text{Hom}}
\newcommand{\rk}{\roman{rk }}
\newcommand{\ds}{\displaystyle}
\newcommand{\oOmega}{\overline\Omega}
\renewcommand\qed{ }
\newcommand{\opartial}{\overline\partial}
\begin{titlepage}
\title{\bf Noncommutative potential theory\thanks{Research partially supported by NSF grant DMS--1464150.\newline
2010 Mathematics Subject Classification 31C05, 31C10, 47A56, 47A68}}
\author{L\'aszl\'o Lempert\\ Department of  Mathematics\\
Purdue University\\West Lafayette, IN
47907-2067, USA}
\thispagestyle{empty}
\end{titlepage}
\date{}
\maketitle
\abstract
We propose to view hermitian metrics on trivial holomorphic vector bundles $E\to\Omega$ as noncommutative analogs of functions defined on the base $\Omega$, and curvature as the notion corresponding to the Laplace operator or $\partial\opartial$.
We discuss noncommutative generalizations of basic results of ordinary potential theory, mean value properties, maximum principle, Harnack inequality, and the solvability of Dirichlet problems.
\endabstract

\section{Introduction}

Traditional potential theory is the study of the Laplace operator, harmonic and subharmonic functions, and related notions.
The Laplacian, while an analytic object, has geometric content as well:\ the curvature of a holomorphic line bundle over a Riemann surface is expressed through it.
By noncommutative potential theory we mean the study of hermitian metrics on holomorphic vector bundles of higher rank, in the spirit of traditional potential theory:\ through maximum principles, averaging properties, the Dirichlet problem, regularization, and more.
Although in complex geometry chiefly vector bundles of finite rank occur, one still encounters there and elsewhere---e.g., in harmonic analysis or mathematical physics---bundles with Hilbert or Banach space fibers, or even more general bundle--like objects, see e.g.~[ADW,B,L3,LSz,Rc].
Accordingly, in this paper we will discuss vector bundles with Hilbert space fibers and hermitian metrics on them.
At the same time, we will focus on trivial Hilbert bundles, typically over open subsets $\Omega$ of $\bC$.
Some of our results clearly have implications for general vector bundles (and higher dimensional bases), but the analogy between the Laplacian or $\partial\opartial$ and curvature in a general vector bundle is clearest if the bundle is locally trivialized first.

We shall write $\Hom(V,W)$ for the space of continuous linear maps between Banach spaces $V$ and $W$, End$V$ for Hom$(V,V)$, and $\GL(V)\subset$ End$V$ for the group of invertible elements.
Let $(V,\langle ,\rangle)$ be a complex Hilbert space and $\Omega\subset\bC^n$ open.
A hermitian metric on the trivial bundle $\Omega\times V\to\Omega$ is a function $h\colon\Omega\times V\times V\to\bC$ given by an operator valued function $P$ on $\Omega$
\begin{equation}
h(z,u,v)=\langle P(z) u,v\rangle,\qquad z\in\Omega,\ u,v\in V.
\end{equation}
For the sake of simplicity in this introduction we restrict ourselves to smooth metrics, meaning that $P=P^*\colon\Omega\to\End\, V$ is a $C^\infty$ map taking values in positive invertible operators; but in the main body of the paper we will deal with rougher metrics as well.
We write $\End^+ V\subset\End\, V$ for the cone of positive invertible operators.

The curvature of $h$ or of $P$ is the $\End \,V$ valued $(1,1)$ form on $\Omega$
\begin{eqnarray}
R=R^h=R^P&=&\opartial (P^{-1}\partial P)=P^{-1}\opartial\partial P-P^{-1}\opartial P\wedge P^{-1}\partial P\\
&=&P^{-1}\Bigl(\sum^n_{\mu,\nu=1} P_{\overline z_\mu} P^{-1} P_{z_\nu}-P_{\overline z_\mu z_\nu})  \Bigr) dz_\nu\wedge d\overline z_\mu.\nonumber
\end{eqnarray}
When $\dim V=1$ and $P$ is multiplication by a positive $p\in C^\infty(\Omega)$, $R=\opartial\partial\log p$, and zero curvature corresponds to (pluri)harmonicity.
Our first result is about solving a noncommutative Dirichlet problem on the disc in $\bC$ for general metrics.

\begin{thm}Suppose $\Omega=\{z\in\bC\colon |z| < 1\}$ and $F\colon\partial\Omega\to\End^+ V$ is continuous for the norm topology on $\End^+V\subset\End\, V$.
Then there is a unique continuous $P\colon\overline\Omega\to\End^+V$ that is smooth over $\Omega$,
\begin{eqnarray*}
P&=&F\text{ on }\ \partial\Omega,\\
R^P&=&0\ \text{ on }\ \Omega.
\end{eqnarray*}
Over $\Omega$ one can write $P=H^*H$ with a holomorphic $H\colon\Omega\to \GL(V)$.
If $F$ takes values in a unital $C^*$--subalgebra $\cA$ of $\End\, V$, then so will $P$, and $H$ can be taken with values in $\cA$.
\end{thm}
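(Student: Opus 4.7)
The plan is to construct $P$ in the form $P = H^*H$ for a holomorphic $H\colon\Omega\to\GL(V)$ obtained by an operator-valued outer factorization of $F$, and to handle boundary continuity and uniqueness via a comparison principle. The PDE side comes for free in this setup: if $H\colon\Omega\to\GL(V)$ is holomorphic, then $\partial P = H^*\,\partial H$ (since $H^*$ is antiholomorphic in $z$), so $P^{-1}\partial P = H^{-1}\partial H$ is a holomorphic $(1,0)$-form and $R^P = \opartial(P^{-1}\partial P) = 0$. The problem thus reduces to producing $H$ whose pointwise product $H^*H$ reconstitutes $F$ continuously on $\overline\Omega$.

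For the factorization I would invoke the operator-valued Wiener--Masani--Helson--Lowdenslager theory. Since $F\colon\partial\Omega\to\End^+V$ is norm-continuous on the compact circle $\partial\Omega$, both $F$ and $F^{-1}$ are bounded, so $\varepsilon\,\Id\le F\le M\,\Id$ for constants $\varepsilon,M>0$. This theory yields an outer $H\in H^2(\Omega,\End V)$ whose nontangential boundary values satisfy $H^*H=F$ almost everywhere; outerness combined with the lower bound on $F$ forces $H(z)\in\GL(V)$ for every $z\in\Omega$, so $P = H^*H$ is smooth and $\End^+V$-valued on $\Omega$. The technical heart of the argument is to upgrade the a.e.\ boundary identity $H^*H=F$ to uniform continuity of $P$ on $\overline\Omega$. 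In the scalar case this is classical: $\log P$ is the harmonic Poisson extension of $\log F$. In the operator-valued case I would first treat smooth $F$, where the elliptic equation $P_{\bar z z}=P_{\bar z}P^{-1}P_z$ admits a $C^\infty(\overline\Omega)$ solution by standard boundary regularity; then for a general continuous $F$, uniformly approximate by smooth $F_n$ and pass to the limit using a comparison principle of the form $\sup_{\overline\Omega}d(P_1,P_2)\le\sup_{\partial\Omega}d(P_1,P_2)$, with $d$ the natural nonpositively curved symmetric-space distance on $\End^+V$ (presumably already developed in the paper as a noncommutative maximum principle).

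Uniqueness follows from the same comparison principle, applied to two solutions with identical boundary data. Equivalently, writing $P_j=H_j^*H_j$, the holomorphic map $U = H_2H_1^{-1}\colon\Omega\to\GL(V)$ has boundary values satisfying $U^*U=\Id$ almost everywhere, hence $\|U\|\le 1$ and $\|U^{-1}\|\le 1$ on $\Omega$ by the maximum principle for the subharmonic operator norm; this forces $U$ to be a constant unitary, whence $P_1=P_2$. The $C^*$-algebra clause is handled by carrying out the outer factorization inside the Hilbert $\cA$-module $H^2(\Omega,\cA)$: the outer factor of an $\cA$-valued boundary datum remains $\cA$-valued. I anticipate the principal obstacle to be the boundary-continuity step---transferring the a.e.\ Hardy-space boundary identity into uniform continuity of $P$ on $\overline\Omega$ under only the hypothesis that $F$ is norm-continuous, without requiring $H$ itself to extend well to $\partial\Omega$.
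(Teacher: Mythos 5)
Your overall structure mirrors the paper's: reduce to producing a holomorphic $H\colon\Omega\to\GL(V)$ with $H^*H=F$ on $\partial\Omega$, observe that $R^{H^*H}=\opartial(H^{-1}\partial H)=0$ automatically, and settle uniqueness via a maximum/comparison principle. Your uniqueness argument ($U=H_2H_1^{-1}$ is a constant unitary) is essentially the paper's, and your comparison-principle idea for boundary continuity is in the same spirit as the paper's Lemma 3.2 and Corollary 3.3.

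The genuine gap is in the factorization step. You propose to invoke the Wiener--Masani--Helson--Lowdenslager outer factorization as a black box, but this does not deliver the theorem in the generality claimed, for two reasons that the paper explicitly works to circumvent. First, the classical theory is developed for \emph{separable} $V$; the theorem here has no separability hypothesis, and your Hilbert $\cA$-module remark does not obviously repair this. Second, and more subtly, Helson's outer factorization of an $\End V$-valued $F$ produces a factor valued in $\Hom(V,W)$ for some auxiliary Hilbert space $W$, not a priori a square factor in $\End V$ (let alone one in a prescribed unital $C^*$-subalgebra $\cA$). The paper flags exactly this issue in the discussion after Lemma 3.5, citing it as the reason it does not simply quote Helson's theorem. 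To avoid both problems the paper builds the factorization from scratch: a perturbative implicit-function-theorem lemma (Lemma 3.4) near constant $F$, a continuity-method proof of a Fej\'er--Riesz-type factorization for operator Laurent polynomials (Lemma 3.5) carried out entirely inside $\cA$, and then approximation of a general continuous $F$ by Laurent polynomials, with Corollary 3.3 (a consequence of the curvature maximum principle) guaranteeing uniform convergence of $H_\nu^*H_\nu$ on $\overline\Omega$ and hence boundary continuity of the limit. Your proposal correctly anticipates that boundary continuity is delicate, but it underestimates the difficulty of obtaining the square, $\cA$-valued outer factor in the first place; that is where the real work of the paper lies.
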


Related results, when $\cA=\End V$, have been known for quite a while.
When $\dim V<\infty$, Coifman and Semmes solved Dirichlet problems not only for hermitian but for Finsler metrics as well.
Even earlier, Masani and Wiener solved a Dirichlet problem when $V$ is a finite dimensional Hilbert space, and $F,\log \|F\|$ are only integrable.
Then the conclusion is necessarily weaker than in our theorem.
Devinatz, Douglas, Helson, Foia\c s and Sz.--Nagy subsequently extended this latter result to separable $V$.
This author considered Dirichlet problems with boundary values more regular than continuous, again when $\dim V<\infty$.
See [CS,De,Do,H,L1,SzF,WM].

When the base $\Omega$ is one dimensional, semipositivity/negativity of the curvature simply means
\[
P_{\overline z} P^{-1} P_z-P_{ z\overline z}\geq 0,\text{ respectively }\leq 0,
\]
and we next turn to mean value properties of such metrics.

\begin{thm}A smooth hermitian metric as in (1.1) has seminegative curvature if and only if for every disc $\{z\in\bC\colon |z-a|\leq r\}$ contained in $\Omega$
\begin{equation}
\int_{|z-a|=r} P|dz|-\int_{|z-a|=r} Pdz \left(\int_{|z-a|=r} P|dz|\right)^{-1}\int_{|z-a|=r} Pd\overline z\geq 2\pi r P(a).
\end{equation}
(Here $|dz|$ refers to integration with respect to arc length.) If the curvature is seminegative, the left hand side of (1.4), divided by $r$, is an increasing function of $r$.
\end{thm}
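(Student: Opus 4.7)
The plan is to rewrite the LHS of (1.4) as a minimum over affine holomorphic $V$-valued functions $f(z)=u+(z-a)v$, thereby reducing (1.4) to the subharmonicity of the scalar function $\phi_f(z)=\langle P(z)f(z),f(z)\rangle$; the main obstacle is identifying this Schur-complement structure, after which the rest is a standard application of the mean value property for subharmonic functions. Concretely, set $X=\int_{|z-a|=r}P|dz|$ and $Y=\int_{|z-a|=r}Pdz$, so that $Y^*=\int Pd\overline z$. Using $(z-a)|dz|=-ir\,dz$ on the circle $|z-a|=r$, a direct expansion gives for $u,v\in V$ and $f(z)=u+(z-a)v$
\[
\int_{|z-a|=r}\langle Pf,f\rangle|dz| = \langle Xu,u\rangle - ir\langle Yv,u\rangle + ir\langle Y^*u,v\rangle + r^2\langle Xv,v\rangle.
\]
Compactness and continuity force $P\ge\varepsilon\,\Id$ on the circle, so $X$ is invertible, and minimizing the quadratic form in $v$ produces the Schur complement $\langle(X-YX^{-1}Y^*)u,u\rangle$, which is precisely the LHS of (1.4) viewed as a quadratic form in $u$.

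Next I verify that $\phi_f$ is subharmonic for every holomorphic $f$ whenever $R^P\le 0$. Using $f_{\overline z}=0$ and $(f^*)_z=0$, with $f'=f_z$ and $g=f'+P^{-1}P_z f$, a direct computation yields
\[
\partial_z\partial_{\overline z}\langle Pf,f\rangle = \langle Pg,g\rangle + \langle(P_{z\overline z}-P_{\overline z}P^{-1}P_z)f,f\rangle,
\]
and both summands are $\ge 0$ under $R^P\le 0$. The mean value inequality $\phi_f(a)\le\frac{1}{2\pi r}\int_{|z-a|=r}\phi_f|dz|$, applied to $f(z)=u+(z-a)v$ (so $\phi_f(a)=\langle P(a)u,u\rangle$), and then minimization over $v$ via the previous paragraph, deliver (1.4).

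For the converse, Taylor expansion of $P$ about $a$ together with the vanishing of $\int_0^{2\pi}e^{in\theta}d\theta$ for $n\ne 0$ gives
\[
X = 2\pi rP(a) + 2\pi r^3 P_{z\overline z}(a) + O(r^5),\quad Y = 2\pi ir^2 P_{\overline z}(a) + O(r^4),
\]
and hence
\[
X - YX^{-1}Y^* - 2\pi rP(a) = 2\pi r^3\bigl(P_{z\overline z} - P_{\overline z}P^{-1}P_z\bigr)(a) + O(r^5).
\]
If (1.4) holds for all sufficiently small $r>0$, dividing by $r^3$ and letting $r\to 0$ forces $P_{z\overline z}(a)-P_{\overline z}(a)P(a)^{-1}P_z(a)\ge 0$, i.e.\ $R^P(a)\le 0$.

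Finally, for monotonicity under $R^P\le 0$, observe that for each fixed $u,v$,
\[
F(r;u,v) := \frac{1}{r}\int_{|z-a|=r}\langle Pf,f\rangle|dz| = \int_0^{2\pi}\phi_f(a+re^{i\theta})d\theta
\]
is $2\pi$ times the circle mean of the subharmonic function $\phi_f$, hence nondecreasing in $r$. Given $r_1<r_2$ and a minimizer $v_2^*$ at radius $r_2$, one has $\min_v F(r_1;u,v)\le F(r_1;u,v_2^*)\le F(r_2;u,v_2^*) = \min_v F(r_2;u,v)$, so $r\mapsto\langle(X-YX^{-1}Y^*)u,u\rangle/r$ is nondecreasing for every $u$, giving the claimed operator monotonicity of $r^{-1}$ times the LHS of (1.4).
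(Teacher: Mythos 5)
Your proof is correct, and the underlying mechanism is the same as the paper's: test subharmonicity of $\langle P\varphi,\varphi\rangle$ against affine holomorphic sections $\varphi$, apply the mean value inequality, and identify the resulting quadratic form in the linear coefficient with a Schur complement; the converse is the same Taylor expansion. Where you differ is in how you handle the Schur complement itself. The paper packages the averages into a $2\times 2$ operator block matrix (its inequality (4.2)), invokes an abstract Schur-complement positivity criterion (Lemma 4.3) to pass to (1.4), and then for the monotonicity statement proves and applies a separate operator-monotonicity lemma for Schur complements (Lemma 4.5(ii)). You instead use the \emph{variational} characterization
\[
\langle (X-YX^{-1}Y^*)u,u\rangle=\min_{v}\Bigl(\langle Xu,u\rangle-ir\langle Yv,u\rangle+ir\langle Y^*u,v\rangle+r^2\langle Xv,v\rangle\Bigr),
\]
which turns both the positivity and the monotonicity into one-line consequences of the scalar mean value inequality and of the elementary fact that a pointwise minimum of nondecreasing functions is nondecreasing. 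This buys a more self-contained and transparent argument; it avoids introducing Lemmas 4.3 and 4.5. On the other hand, the paper's lemmas are used again in Section 4 (e.g. in the proof of Theorem 4.7 and Lemma 4.6), so they pay for themselves there. One further small difference: you explicitly compute
\[
\partial_z\partial_{\overline z}\langle Pf,f\rangle=\langle Pg,g\rangle+\langle(P_{z\overline z}-P_{\overline z}P^{-1}P_z)f,f\rangle,\qquad g=f'+P^{-1}P_zf,
\]
to show that $R^P\le 0$ implies subharmonicity of $\langle Pf,f\rangle$ for every holomorphic $f$; the paper states this equivalence as a remark following Definition 4.1 without displaying the computation. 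Including it makes your write-up fully self-contained.
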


This is clearly analogous to the mean value property of subharmonic functions.
But even when $\dim V=1$ the two are different, for then (1.4) boils down to characterizing subharmonicity of $u=\log p$ in terms of integrals of $p$, rather than of $\log p$.

In Section 4 we will also prove a related characterization of semipositive curvature (but it is {\sl not} (1.3) with the inequality sign reversed).

Behind these results is a maximum principle.
Consider an open $\Omega\subset\bC^n$ and hermitian metrics $h,k$ on trivial holomorphic vector bundles $E=\Omega\times V\to\Omega$, $F=\Omega\times W\to\Omega$.
Write $h_z(u,v)$ for the inner product $h(z,u,v)$ on $V$, for $z\in\Omega$, and similarly $k_z$.
A holomorphic homomorphism $A\colon E\to F$ can be thought of as a holomorphic map $\Omega\to\Hom (V,W)$.
Its norm $\|A\|\colon \Omega\to [0,\infty)$ is obtained by taking for each $z\in\Omega$ the norm of the operator $A(z)\colon (V,h_z)\to (W,k_z)$.

\begin{thm}If $A$ decreases curvature in the sense that for $z\in\Omega,\xi\in T^{1,0}_z\Omega$
\[
{k_z (R^k (\xi,\overline\xi)Av,Av)\over k_z(Av,Av)} \leq {h_z(R^h(\xi,\overline\xi)v,v)\over h_z(v,v)} ,\quad v\in V\text{ such that }A(z)v\neq 0,
\]
then $\log \|A\|$ is plurisubharmonic.
In particular, $\log \|A\|$ satisfies the maximum principle.
\end{thm}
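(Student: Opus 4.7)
\textit{Plan of proof.}  I would prove $u := \log \|A\|$ plurisubharmonic via the standard viscosity characterization:  for each $z_0 \in \Omega$ with $A(z_0) \neq 0$ and every $C^2$ function $\phi$ on a neighborhood of $z_0$ for which $u - \phi$ attains a local maximum at $z_0$, the Hermitian $(1,1)$ form $i\partial \opartial \phi(z_0)$ is positive semidefinite.  To establish this, I would construct at each such $z_0$ a smooth local lower support $\tilde \phi$ with $\tilde \phi \leq u$ near $z_0$, $\tilde \phi(z_0) = u(z_0)$, and $i\partial \opartial \tilde \phi(z_0) \geq 0$.  Then $\phi - \tilde \phi$ is nonnegative near $z_0$ with value $0$ at $z_0$, so it attains a local minimum there; hence $i\partial \opartial(\phi - \tilde \phi)(z_0) \geq 0$, and therefore $i\partial \opartial \phi(z_0) \geq i\partial \opartial \tilde \phi(z_0) \geq 0$.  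Points $z_0$ with $A(z_0) = 0$ need no separate treatment, since $u(z_0) = -\infty$ there.

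For the construction, pick $v_0 \in V$ with $\|v_0\|_{h_{z_0}} = 1$ attaining $\|A(z_0)\| = \|A(z_0) v_0\|_{k_{z_0}}$, and define the linear holomorphic $V$-valued function
\[
v(z) = v_0 - \sum_\mu (z_\mu - z_{0, \mu})\, P(z_0)^{-1} P_{z_\mu}(z_0)\, v_0.
\]
Then $v(z_0) = v_0$ and a direct computation shows that the Chern $(1, 0)$-covariant derivative $\partial v + P^{-1}(\partial P) v$ of $v$, regarded as a section of $(E, h)$, vanishes at $z_0$.  Set
\[
\tilde \phi(z) := \log \|A(z) v(z)\|_{k_z} - \log \|v(z)\|_{h_z}.
\]
By the very definition of the operator norm, $\tilde \phi \leq u$ on a neighborhood of $z_0$, with equality at $z_0$.

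To verify $i\partial \opartial \tilde \phi(z_0) \geq 0$ I would apply the well-known identity
\[
\partial \opartial \log |s|^2_H = \frac{|D^{1, 0} s|^2_H\, |s|^2_H - |H(D^{1, 0} s, s)|^2}{|s|^4_H} - \frac{H(R^H s, s)}{|s|^2_H}
\]
(valid for any holomorphic section $s$ of a Hermitian holomorphic bundle $(E, H)$) to $w = A v$ as a section of $(F, k)$ and to $v$ as a section of $(E, h)$, and subtract.  The Cauchy--Schwarz term on the right is nonnegative in general, vanishes at $z_0$ for $v$ because $D^{1, 0}_E v(z_0) = 0$, and remains nonnegative for $w$.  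The curvature pieces combine, when evaluated on $(\xi, \overline \xi)$, into
\[
\frac{h_{z_0}(R^h(\xi, \overline \xi) v_0, v_0)}{h_{z_0}(v_0, v_0)} - \frac{k_{z_0}(R^k(\xi, \overline \xi) A v_0, A v_0)}{k_{z_0}(A v_0, A v_0)},
\]
which is $\geq 0$ by the curvature-decreasing hypothesis.  Hence $i\partial \opartial \tilde \phi(z_0)(\xi, \overline \xi) \geq 0$ for every $\xi \in T^{1, 0}_{z_0} \Omega$, as required.  The main subtlety---and the reason one cannot take $v$ to be a constant section---is that the Cauchy--Schwarz term from the $E$-side is then generically strictly positive and the curvature hypothesis need not compensate for it; the first-order correction in $v$ forces that term to vanish precisely at the point where the pointwise viscosity inequality needs to be checked.
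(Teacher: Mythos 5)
Your approach is essentially the standard one for this kind of result, and the key computational idea is right: choose a holomorphic section $v$ whose Chern $(1,0)$-covariant derivative vanishes at $z_0$, so that in the second-variation formula the Cauchy--Schwarz term for $v$ drops out at $z_0$ while the one for $w=Av$ stays nonnegative, and the two curvature terms combine into exactly the quantity controlled by the hypothesis. Note that the paper does not actually prove Theorem~1.3; it cites [L2] for it, so one can only compare your argument with the natural proof rather than a text in this paper.

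There is, however, a genuine gap. You ``pick $v_0\in V$ with $\|v_0\|_{h_{z_0}}=1$ attaining $\|A(z_0)\|=\|A(z_0)v_0\|_{k_{z_0}}$.'' In infinite dimensions a bounded operator between Hilbert spaces need not attain its norm, and this paper is specifically about Hilbert-space fibers, so such a $v_0$ may simply not exist. With only a near-maximizer $v_0^{(n)}$ ($\|A(z_0)v_0^{(n)}\|\to\|A(z_0)\|$) you get lower supports $\tilde\phi_n\le u$ with $\tilde\phi_n(z_0)\to u(z_0)$ but no longer $\tilde\phi_n(z_0)=u(z_0)$. Then the core step of your viscosity argument breaks down: $\phi-\tilde\phi_n$ is nonnegative near $z_0$ but is not known to attain a local minimum there, since its value at $z_0$ is a small positive $\delta_n$ rather than $0$, and the actual minimum may migrate away from $z_0$, where you have no sign control on $\partial\opartial\tilde\phi_n$. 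The gap can be repaired, but it takes real work: one must note that because $v_0^{(n)}$ ranges over the unit sphere and $\|A(z_0)v_0^{(n)}\|$ is bounded below, the $\tilde\phi_n$ have uniformly bounded higher derivatives on a fixed neighborhood of $z_0$; then, e.g., replace $u$ by $u_\eta=u+\eta|z|^2$, use the uniform Taylor bounds plus $\partial\opartial\tilde\phi_n(z_0)\ge 0$ to get the sub-mean-value inequality for $u_\eta$ over all small circles about $z_0$, conclude $u_\eta$ is psh, and let $\eta\searrow 0$. Your proposal does not address any of this, and as written it only proves the finite-dimensional (or norm-attaining) case.
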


This result is not new, it is a special case of what we proved in [L2].
Related results had been known earlier:\ Coifman and Semmes proved an analogous result for Finsler metrics when $\dim V<\infty$, and Berndtsson proved the infinite rank case when $R^h$ or $R^k=0$, see [BK,CS].

As said, various results in the paper, even if formulated only for bundles over one dimensional bases, have obvious generalizations to higher dimensional bases. But not all these generalizations are satisfactory. For example, an integral characterization of Nakano semipositivity/negativity in the spirit of Theorems 1.2 (or 4.2) and 4.7 is lacking. Yet such a characterization would be useful to study Nakano curvature of uniform limits of, say, Nakano semipositively curved hermitian metrics. 

I am grateful to Kuang--Ru Wu for his questions and critical remarks concerning the first version of this paper.

\section{Smoothness classes of hermitian metrics}

What one means by a hermitian metric of class $C^k$ on a vector bundle of finite rank is unambiguous, but in bundles of infinite rank several definitions are possible depending on the topology one uses on spaces of operators.
Since in matters of smoothness there is no difference between hermitian metrics and general sesquilinear forms, in discussing smoothness classes we will deal with the latter.
We will also allow the base to be a subset of real Euclidean space, and later a smooth manifold.

So, let $\Omega\subset\bR^m$ be open and $(V,\langle \ , \ \rangle)$ a complex Hilbert space.
A sesquilinear form on the bundle $E=\Omega\times V\to\Omega$ is a function $h\colon\Omega\times V\times V\to\bC$ such that 
\[
h_x= h(x,\cdot,\cdot)\colon V\times V\to\bC
\]
is a continuous sesquilinear form for each $x\in\Omega$.
Such an $h$ can be represented as
\begin{equation}
h_x(v,w)=\langle P(x)v,w\rangle,\qquad\text{with }P\colon\Omega\to\End\, V.
\end{equation}
The weakest notion of $C^k$ smoothness, $k=0,1,\ldots,\infty$, is obtained by requiring that $h_x(v,w)=\langle P(x),v,w\rangle$ should be a $C^k$ function of $x\in\Omega$ when $v,w\in V$ are fixed.
If this is so, we say $h$ or $P$ are $C^k_{\text{weak}}$, or that $P\in C^k_{\text{weak}}(\Omega,\End \,V)$.
The strongest requirement is that the map $P\colon\Omega\to\End\, V$ should be $C^k$, when $\End\, V$ is endowed with the operator norm; we then say $h$ or $P$ are $C^k_{\op}$, or that $P\in C_{\op}^k(\Omega,\End\, V)$.
When $k=0$, we just write $C_{\weak}$, $C_{\op}$.

In our context, as elsewhere, H\"older classes $C^k$ with nonintegral $k$ behave better than those with integral $k$.
For one thing, their weak and operator norm versions turn out to coincide.
Because of this the notation will not indicate which topology on $\End\, V$ is used.
Their definition is as follows.

Write $\lfloor k\rfloor$ for the integer part of $k\in\bR$.
If $k\in (0,\infty)$ is nonintegral, we say that $h$ or $P$ is $C^k$, or $P\in C^k(\Omega,\End\, V)$, if for any $v,w\in V$ the function $x\mapsto h_x(v,w)$ has partials of order $\lf k\rf$ on $\Omega$, and these partials are locally H\"older continuous with exponent $k-\lfloor k\rfloor$.
If $k=0,1,\ldots$ and the partials of order $k$ are locally Lipschitz continuous, i.e., H\"older continuous with exponent 1, we say $h$ and $P$ are $C^{k,1}$, or that $P\in C^{k,1}(\Omega,\End\, V)$.

\begin{prop}Let $k\in [0,\infty)$.
If $P\in C^k_{\weak} (\Omega,\End\, V)$ (when $k$ is integral) or $P\in C^k(\Omega,\End\, V)$ (otherwise), and $\varphi,\psi\colon \Omega\to V$ are $C^k$ functions in the norm topology of $V$, then $\langle P\varphi,\psi\rangle\colon\Omega\to\bC$ is also $C^k$.
Furthermore, if $k\in (0,\infty)$ is nonintegral and $P\in C^k (\Omega,\End\, V)$, then $P\in C_{\op}^{\lfloor k\rfloor} (\Omega,\End\, V)$ and its partials of order $\lfloor k\rfloor$ are locally H\"older continuous with exponent $k-\lfloor k\rfloor$.
\end{prop}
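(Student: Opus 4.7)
The key analytic tool throughout is the Banach--Steinhaus principle, used to convert pointwise (in $v,w$) bounds into operator-norm bounds. As a preliminary, for $|\beta|\le n:=\lfloor k\rfloor$ I would show that the scalar weak partial $\partial^\beta\langle P(\cdot)v,w\rangle$ arises from a bounded operator $\partial^\beta P(x)\in\End V$, and that $\partial^\beta P$ is itself again weakly of class $C^{k-|\beta|}$. Induct on $|\beta|$: the difference quotients defining $\partial^\beta P(x)$ have pointwise-convergent matrix elements, so two applications of Banach--Steinhaus (first to the functionals $w\mapsto\langle Tv,w\rangle$ for fixed $v$, then to the operators themselves) force them to be uniformly bounded in operator norm, making the limiting sesquilinear form bounded.

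The base case $k=0$ of the first assertion proceeds as follows. The same Banach--Steinhaus argument applied to $P$ itself gives $\sup_{x\in K}\|P(x)\|_{\op}<\infty$ for each compact $K\subset\Omega$. Then continuity of $\langle P\varphi,\psi\rangle$ at $x$ follows by decomposing
\[
\langle P(x)\varphi(x),\psi(x)\rangle-\langle P(y)\varphi(y),\psi(y)\rangle
\]
via the intermediates $\langle P(x)\varphi(y),\psi(y)\rangle$ and $\langle P(x)\varphi(x),\psi(y)\rangle$: two of the three pieces vanish as $y\to x$ by the operator-norm bound on $P$ together with norm continuity of $\varphi,\psi$, while the third, $\langle(P(x)-P(y))\varphi(x),\psi(x)\rangle$, vanishes by weak continuity of $P$ at the fixed vectors $\varphi(x),\psi(x)$.

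For the second assertion, write $k=n+\alpha$ with $\alpha\in(0,1)$. For $|\beta|=n$ and a compact $K\subset\Omega$, apply Banach--Steinhaus twice to the family $T_{x,y}=(\partial^\beta P(x)-\partial^\beta P(y))/|x-y|^\alpha$; their matrix elements are bounded uniformly in $x,y\in K$ by the weak H\"older hypothesis, so $\sup\|T_{x,y}\|_{\op}<\infty$, i.e., $\partial^\beta P$ is operator-norm H\"older of exponent $\alpha$ on $K$. Descending in $|\beta|$ from $n$ to $0$, one upgrades weak to operator-norm differentiability: since $\partial^{\beta+e_j}P$ is operator-norm continuous (inductively), the scalar fundamental theorem of calculus for matrix elements together with Bochner integrability of the continuous $\End V$-valued integrand gives
\[
\partial^\beta P(x+te_j)-\partial^\beta P(x)=\int_0^t\partial^{\beta+e_j}P(x+se_j)\,ds
\]
in $\End V$, hence operator-norm differentiability of $\partial^\beta P$ with the expected derivative.

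The first assertion at general $k$ then follows. The integer case is induction on $k$: by the weak Leibniz rule, partials of $\langle P\varphi,\psi\rangle$ of order $k$ are finite sums of terms $\langle\partial^\beta P\,\partial^\gamma\varphi,\partial^\delta\psi\rangle$ with $|\beta|+|\gamma|+|\delta|=k$, each continuous by the base case. For nonintegral $k$, the second assertion ensures each $\partial^\beta P$ is operator-norm H\"older $\alpha$ (when $|\beta|=n$) or Lipschitz (when $|\beta|<n$), so each top-order Leibniz summand is H\"older $\alpha$ on compacts. The principal obstacle is precisely the second assertion: weak derivatives do not automatically give operator-norm derivatives, so one must first establish operator-norm H\"older continuity of the top partials via Banach--Steinhaus on difference quotients, and then propagate operator-norm differentiability downward by Bochner integration.
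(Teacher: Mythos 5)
Your proposal is correct and follows essentially the same path as the paper's proof: uniform boundedness (Banach--Steinhaus) applied twice to difference quotients to produce $\End\, V$-valued weak partials, then again to H\"older quotients of the top-order partials to get operator-norm H\"older continuity, and finally integration of the norm-continuous partials to upgrade weak to operator-norm differentiability of the lower-order ones. The one step you compress that the paper works out in detail is the weak Leibniz rule (its formula (2.2)): since $\partial_\mu P$ is a priori only a weak partial in the integer case, the identity $\partial_\mu\langle P\varphi,\psi\rangle=\langle\partial_\mu P\,\varphi,\psi\rangle+\langle P\,\partial_\mu\varphi,\psi\rangle+\langle P\varphi,\partial_\mu\psi\rangle$ is not automatic, and the paper proves it via first-order Taylor expansions of $\varphi,\psi$ together with the local operator-norm bound on $P$ coming from uniform boundedness.
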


Thus, if $Q$ is a partial of $P$ of order $\lf k\rf$, and $K\subset\Omega$ is compact, there is a constant $C$ such that
\[
\|Q(x)-Q(y)\|\leq C|x-y|^{k-\lf k\rf},\quad x,y\in K.
\]
Here $||\,||$  denotes the operator norm on $\End V$; but below we also use it to denote the norm on $V$ induced by the inner product.

\begin{prop}Let $k=0,1,\ldots$.
If $P\in C^{k,1}(\Omega,\End\, V)$ then $P$ is $C^k_{\op}$, and its partials of order $k$ are locally Lipschitz continuous.
\end{prop}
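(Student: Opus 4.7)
My plan is to reduce the proposition to the case $k=0$ using Proposition 2.1, and to handle that base case via a double application of the uniform boundedness principle.

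For the base case, suppose $P\in C^{0,1}(\Omega,\End\,V)$ and fix a compact $K\subset\Omega$. By hypothesis, for each pair $v,w\in V$ there is a constant $C(v,w)$ with
\[
\bigl|\langle(P(x)-P(y))v,w\rangle\bigr|\leq C(v,w)\,|x-y|,\qquad x,y\in K.
\]
Fixing $v$, the antilinear functionals $w\mapsto\langle(P(x)-P(y))v,w\rangle/|x-y|$ indexed by $x\neq y\in K$ are pointwise bounded on $V$; Banach--Steinhaus together with Riesz representation then yield a constant $C(v)$ with $\|(P(x)-P(y))v\|\leq C(v)|x-y|$. Now the family of operators $(P(x)-P(y))/|x-y|\in\End\,V$ is pointwise bounded in $v$, and a second application of Banach--Steinhaus produces a constant $C_K$ with $\|P(x)-P(y)\|_{\op}\leq C_K|x-y|$ for all $x,y\in K$, giving local operator-norm Lipschitz continuity.

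For $k\geq 1$, the inclusion $C^{k,1}(\Omega,\End\,V)\subset C^{k+\alpha}(\Omega,\End\,V)$ for any $\alpha\in(0,1)$ lets me apply Proposition 2.1 with nonintegral exponent $k+\alpha$. That proposition gives $P\in C^k_{\op}(\Omega,\End\,V)$ and, for each multi-index $\gamma$ with $|\gamma|\leq k$, a continuous partial $\partial^\gamma P\colon\Omega\to\End\,V$ satisfying $\langle\partial^\gamma P(x)v,w\rangle=\partial^\gamma\langle P(x)v,w\rangle$. For $|\gamma|=k$ the right side is locally Lipschitz in $x$ by hypothesis, so $\partial^\gamma P$ is of class $C^{0,1}$ in the weak sense, and the base case applied to the operator-valued function $\partial^\gamma P$ supplies the required operator-norm Lipschitz continuity.

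The key obstacle is the base case itself, because Lipschitz continuity (H\"older exponent $1$) cannot be produced via Proposition 2.1: being H\"older with every exponent $<1$ does not imply Lipschitz. The two successive Banach--Steinhaus arguments are what compensate, converting uncountable pointwise Lipschitz information into a uniform operator-norm bound over each compact set.
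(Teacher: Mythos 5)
Your proof is correct. The base case $k=0$ is exactly the argument the paper uses for $0<k<1$ in Proposition~2.1 (double application of Banach--Steinhaus to the difference quotients), specialized to exponent $1$; the paper declares Proposition~2.2 ``analogous'' to Proposition~2.1, which I read as repeating the full induction of part (a): form the weak partials $Q_\mu$, verify $Q_\mu\in C^{k-1,1}$, apply the inductive hypothesis, and integrate to identify $Q_\mu$ with the operator-norm partial $\partial_\mu P$. Your route is organized differently and is shorter: you observe that the {\it existence} of the operator-norm partials up to order $k$ is already delivered by Proposition~2.1 via the cheap inclusion $C^{k,1}\subset C^{k+\alpha}$ ($0<\alpha<1$), so the only genuinely new estimate is the Lipschitz modulus at the top order, which you obtain by feeding $\partial^\gamma P$ (for $|\gamma|=k$) into the $k=0$ case. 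This avoids re-running the inductive machinery, at the cost of relying on Proposition~2.1 as a black box rather than producing a self-contained induction. Both proofs hinge on the same key lemma; what yours buys is economy, and it makes transparent that the only place Proposition~2.1 falls short of Proposition~2.2 is the endpoint modulus of continuity.

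One small point to make explicit: when you apply the base case to $\partial^\gamma P$, you use that $\langle\partial^\gamma P(x)v,w\rangle=\partial^\gamma\langle P(x)v,w\rangle$, which follows because $P$ is $C^k_{\op}$ and $A\mapsto\langle Av,w\rangle$ is a bounded linear functional on $\End V$; and that a locally Lipschitz scalar function is Lipschitz on each compact $K\subset\Omega$ (cover by finitely many Lipschitz balls, use a Lebesgue number, and handle the far-apart pairs with the sup norm). You state the latter as the choice of $C(v,w)$ uniform over $K$, which is fine but worth flagging since ``locally Lipschitz'' by itself does not literally give a single constant on $K$ without this small argument.
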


Since $C^k_{\weak}\subset C^{k-1,1}$ for $k\in\bN$, Proposition 2.2 implies

\begin{prop}$C^k_{\weak}(\Omega,\End\, V)\subset C_{\op}^{k-1}(\Omega,\End\, V)$ when $k\in\bN$, and so $C^\infty_{\weak}=C_{\op}^\infty$.
\end{prop}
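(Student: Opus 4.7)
The plan is to follow the hint stated immediately before the proposition: establish the inclusion $C^k_{\weak}(\Omega,\End V)\subset C^{k-1,1}(\Omega,\End V)$ for each $k\in\bN$, and then invoke Proposition 2.2 (with its index replaced by $k-1$) to conclude $C^{k-1,1}\subset C^{k-1}_{\op}$.

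For the first inclusion, suppose $P\in C^k_{\weak}$. By definition, for every $v,w\in V$ the scalar function $f_{v,w}(x):=\langle P(x)v,w\rangle$ is of class $C^k$ on $\Omega$. Fix any multi-index $\alpha$ with $|\alpha|=k-1$. Then $\partial^\alpha f_{v,w}$ is $C^1$ on $\Omega$, and a $C^1$ scalar function on an open subset of $\bR^m$ is locally Lipschitz (apply the mean value theorem along line segments contained in a compact neighborhood). This is precisely the statement that the weak partials of $P$ of order $k-1$ are locally Lipschitz in the weak sense, i.e., $P\in C^{k-1,1}(\Omega,\End V)$.

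Applying Proposition 2.2 with $k$ replaced by $k-1$ promotes this weak Lipschitz information to the operator norm and yields $P\in C^{k-1}_{\op}(\Omega,\End V)$. For the final claim $C^\infty_{\weak}=C^\infty_{\op}$, the inclusion $C^\infty_{\op}\subset C^\infty_{\weak}$ is immediate from the trivial comparison of topologies on $\End V$. Conversely, $P\in C^\infty_{\weak}$ means $P\in C^k_{\weak}$ for every $k\in\bN$, whence by the inclusion just established $P\in C^{k-1}_{\op}$ for every $k$, so $P\in C^\infty_{\op}$.

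The main obstacle is really contained in Proposition 2.2, which is where the uniform boundedness principle converts pointwise/weak Lipschitz bounds on $f_{v,w}$ into genuine operator norm bounds on $P$. Once Proposition 2.2 is in hand, Proposition 2.3 is an essentially formal consequence, with the only observation needed being that a $C^k$ scalar function automatically has locally Lipschitz partials of one lower order.
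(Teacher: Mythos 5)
Your proposal is correct and follows the same route the paper takes; the line preceding Proposition~2.3 explicitly states the chain $C^k_{\weak}\subset C^{k-1,1}$ followed by an application of Proposition~2.2, which is exactly what you do. You simply spell out the elementary detail (a $C^k$ scalar function has locally Lipschitz order-$(k-1)$ partials, by the mean value theorem applied to the $C^1$ partials of order $k-1$) that the paper leaves implicit, and likewise make explicit the trivial bookkeeping for the $C^\infty$ case.
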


\begin{proof}We will only prove Proposition 2.1, the proof of Proposition 2.2 is analogous.

(a)\ Suppose $k$ is nonintegral.
It suffices to prove the second statement in Proposition 2.1, that we do by induction on $\lf k\rf$.
Assume first $0<k<1$.
Given sequences $x_j \neq y_j\in\Omega$ with limits $x,y\in\Omega$, for fixed $v,w\in V$ the sequence 
\[
\left\langle {P(x_j)-P(y_j)\over |x_j-y_j|^k}\ v,w\right\rangle,\quad j=1,2,\ldots
\]
is bounded.
Two applications of the principle of uniform boundedness then give that $\|P(x_j)-P(y_j)\|/|x_j-y_j|^k$ is bounded, whence the claim follows.

Next suppose that $l\in\bN$, the proposition has been proved for exponents $<l$, and $P$ is $C^k$ with $k\in (l,l+1)$.
Write $x_\mu$ for the coordinates on $\Omega \ (\mu=1,2,\ldots,m)$, and $\partial_\mu$ for $\partial/\partial x_\mu$.
For fixed $v,w\in V$ representing $\partial_\mu \langle Pv,w\rangle$ as the limit of difference quotients, the principle of uniform boundedness again applies and gives a $Q_\mu\colon\Omega\to\End\, V$ such that $\partial_\mu \langle Pv,w\rangle=\langle Q_\mu v,w\rangle$.
Thus $\langle Q_\mu v,w\rangle$ is $C^{k-1}$, and by the inductive hypothesis $Q_\mu$ has partials of order $l-1$, locally H\"older continuous with exponent $k-1-(l-1)=k-\lf k\rf$.
In particular, $Q_\mu\in C_{\op}(\Omega,\End V)$.
Hence 
\begin{multline*}
 \int_a^b Q_\mu (x_1,\ldots,x_{\mu-1},t,x_{\mu+1},\ldots)dt=\\
 P(x_1,\ldots,x_{\mu-1},b,x_{\mu+1},\ldots)-P(x_1,\ldots,x_{\mu-1},a,x_{\mu+1},\ldots),
\end{multline*}
whenever the path over which we integrate is in $\Omega$.
Thus $Q_\mu =\partial_\mu P$, partial understood in the norm topology.
But then the partials of $P$ of order $l=\lf k\rf$ are locally H\"older continuous with exponent $k-\lf k\rf$, as claimed.

(b)\ Next suppose $P\in C_{\weak}(\Omega,\End\, V)$.
Given a sequence $x_j\in\Omega$ with limit $x\in\Omega$, again by the principle of uniform boundedness $\sup_j\|P(x_j)\|<\infty$.
Hence
\begin{multline*}
 \langle P(x_j)\varphi(x_j),\psi(x_j)\rangle-\langle P(x)\varphi(x),\psi(x)\rangle=\langle (P(x_j)-P(x))\varphi(x),\psi(x)\rangle+\\
 \langle P(x_j)(\varphi(x_j)-\varphi(x)),\psi(x)\rangle+\langle P(x_j)\varphi(x_j), \psi(x_j)-\psi(x)\rangle\to 0
\end{multline*}
as $j\to\infty$, and $\langle P\varphi,\psi\rangle$ is indeed continuous.

(c)\ Finally, suppose $k\in\bN$ and $P$ is $C^k_{\weak}$.
We assume, inductively, that the claim in Proposition 2.1 is true when $k$ is replaced by $k-1$.
As in part (a) of the proof, there are weak partials $Q_\mu\colon\Omega\to\End\, V$ such that $\partial_\mu\langle Pv,w\rangle=\langle Q_\mu v,w\rangle$ for all $v,w\in V$.
If $\varphi,\psi\colon\Omega\to V$ are $C^k$, or even just $C^1$, we claim that $\langle P\varphi,\psi\rangle$ has partial derivatives
\begin{equation}
\partial_\mu\langle P\varphi,\psi\rangle=\langle Q_\mu \varphi,\psi\rangle+\langle P\partial_\mu\varphi,\psi\rangle + \langle P\varphi,\partial_\mu\psi\rangle.
\end{equation}

With $x\in\Omega$ and $\xi=(\xi_1,\ldots,\xi_m)\in\bR^m$ small we can write 
\[
\varphi(x+\xi)=\varphi(x)+\sum^m_{\nu=1} a_\nu (x,\xi)\xi_\nu,\qquad \psi(x+\xi)=\psi(x)+\sum^m_{\nu=1} b_\nu (x,\xi)\xi_\nu,
\]
where $a_\nu$, $b_\nu$ are continuous functions of $x,\xi$.
Then
\begin{multline*}
\langle P(x+\xi)\varphi(x+\xi),\psi(x+\xi)\rangle=\langle P(x+\xi)\varphi(x),\psi(x)\rangle\\
+\sum_\nu\xi_\nu \langle P(x+\xi)\varphi(x),b_\nu (x,\xi)\rangle+\sum_\nu\xi_\nu
\langle P(x+\xi) a_\nu (x,\xi),\psi(x+\xi)\rangle.
\end{multline*}
The first term on the right has $\partial/\partial\xi_\mu$ partials by assumption, equal to $\langle Q_\mu (x+\xi)\varphi(x),\psi(x)\rangle$.
As to the other inner products on the right, by part (b) of this proof they are continuous functions of $x$ and $\xi$.
We conclude that all terms on the right have $\partial/\partial\xi_\mu$ partials at $\xi=0$; these partials add up to
\[
\langle Q_\mu (x)\varphi(x),\psi (x)\rangle+\langle P(x)\varphi(x),\partial_\mu \psi(x)\rangle + \langle P(x)\partial_\mu\varphi (x),\psi(x)\rangle,
\]
which proves (2.2).

Now $Q_\mu$ is $C^{k-1}_{\weak}$.
If $\varphi,\psi$ are $C^k$, by the inductive assumption the right hand side of (2.2) is $C^{k-1}$, and $\langle P\varphi,\psi\rangle$ is indeed $C^k$.
\end{proof}

A variant of Proposition 2.1 concerning upper semicontinuity (u.s.c.) also holds:

\begin{prop}Suppose $P=P^*\colon\Omega\to\End\, V$ is bounded below and is weakly u.s.c~in the sense that $\langle Pv,v\rangle$ is u.s.c.~for $v\in V$.
With any $\varphi\in C(\Omega,V)$ then $\langle P\varphi,\varphi\rangle$ is also u.s.c.
\end{prop}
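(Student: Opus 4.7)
The plan is to reduce to the case $P\geq 0$, establish local operator--norm boundedness of $P$ on a neighborhood of any point (via the uniform boundedness principle, mirroring part (b) of the proof of Proposition 2.1), and finally exploit $P=P^*$ together with the continuity of $\varphi$ to split $\langle P(x_n)\varphi(x_n),\varphi(x_n)\rangle$ into a ``main'' term, handled by weak u.s.c.\ for a fixed vector, plus cross terms that vanish.

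First, since $\langle c\,\Id\,\varphi,\varphi\rangle=c\|\varphi\|^2$ is continuous for any $c\in\bR$, replacing $P$ by $P-c\,\Id$ (with $c$ a lower bound for $P$) preserves self--adjointness and weak u.s.c., and reduces matters to $P\geq 0$. Next, for a compact neighborhood $K$ of a point $x_0\in\Omega$, weak u.s.c.\ yields $\sup_{x\in K}\langle P(x)v,v\rangle\leq M_v<\infty$ for every $v\in V$ (u.s.c.\ functions are locally bounded above, and $\langle P(x)v,v\rangle\geq 0$ provides the lower bound). By polarization and two applications of the principle of uniform boundedness---exactly as used in part (b) of the proof of Proposition 2.1---one obtains $\sup_{x\in K}\|P(x)\|\leq M<\infty$.

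Now fix $x_n\to x_0$ in $K$ and write $\varphi(x_n)=\varphi(x_0)+\var_n$ with $\|\var_n\|\to 0$. Using $P=P^*$,
\begin{equation*}
\langle P(x_n)\varphi(x_n),\varphi(x_n)\rangle=\langle P(x_n)\varphi(x_0),\varphi(x_0)\rangle+2\,\mathrm{Re}\langle P(x_n)\varphi(x_0),\var_n\rangle+\langle P(x_n)\var_n,\var_n\rangle.
\end{equation*}
The last two terms are bounded in absolute value by $2M\|\varphi(x_0)\|\,\|\var_n\|+M\|\var_n\|^2\to 0$, while weak u.s.c.\ applied to the fixed vector $\varphi(x_0)$ gives $\limsup_n\langle P(x_n)\varphi(x_0),\varphi(x_0)\rangle\leq\langle P(x_0)\varphi(x_0),\varphi(x_0)\rangle$. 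Combining,
\begin{equation*}
\limsup_n\langle P(x_n)\varphi(x_n),\varphi(x_n)\rangle\leq\langle P(x_0)\varphi(x_0),\varphi(x_0)\rangle,
\end{equation*}
which is the desired upper semicontinuity at $x_0$.

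The only step that is not completely routine is the passage from weak u.s.c.\ to a local operator--norm bound on $P$; without such a bound the cross terms could in principle blow up. But this is precisely the two--step uniform boundedness argument already invoked in the proof of Proposition 2.1(b), now applied to the non--negative form $\langle P(x)\cdot,\cdot\rangle$, so no new idea is required.
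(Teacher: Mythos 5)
Your proof is correct and follows essentially the same approach as the paper: reduce to $P\geq 0$, use the uniform boundedness principle to get a local operator--norm bound, and then split $\langle P(x_n)\varphi(x_n),\varphi(x_n)\rangle$ into a main term handled by weak u.s.c.\ at the fixed vector $\varphi(x_0)$ plus error terms killed by the norm bound. The only cosmetic difference is in the boundedness step, where the paper avoids polarization by setting $Q=P^{1/2}$, so that $\|Q(x_j)v\|^2=\langle P(x_j)v,v\rangle$ is bounded for each $v$ and a single application of uniform boundedness already gives $\sup_j\|Q(x_j)\|<\infty$, hence $\sup_j\|P(x_j)\|=\sup_j\|Q(x_j)\|^2<\infty$.
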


\begin{proof}Upon adding a constant to $P$, we can arrange that $P\geq 0$.
Let $Q=P^{1/2}$.
Given a sequence $x_j\in\Omega$ with limit $x\in\Omega$, for any $v\in V$ the sequence $\|Q(x_j)v\|^2=\langle P(x_j)v,v\rangle$ is bounded.
Hence $\|Q(x_j)\|$ is bounded, and so is $\|P(x_j)\|$.
As before,
\begin{multline*}
\langle P(x_j)\varphi(x_j),\varphi(x_j)\rangle-\langle P(x)\varphi(x),\varphi(x)\rangle=\langle (P(x_j)-P(x))\varphi(x),\varphi(x)\rangle\\
+\langle P(x_j)(\varphi(x_j)-\varphi(x)),\varphi(x)\rangle+\langle P(x_j)\varphi(x_j),\varphi(x_j)-\varphi(x)\rangle,
\end{multline*}
whence
\[
\limsup_{j\to\infty} \langle P(x_j)\varphi(x_j),\varphi(x_j)\rangle\leq \langle P(x)\varphi(x),\varphi(x)\rangle,
\]
as claimed.
\end{proof}

We will also need the following result.
Let $(W,\ \|\ \|)$ be a Banach space.

\begin{prop}Fix $k\in\bN$ and consider a sequence $P_j\colon\Omega\to W$ of functions, $C^k$ in the norm topology.
Assume that $P_j(x)$ converges in norm, uniformly for $x\in\Omega$, and that the partials of $P_j$, of order $\leq k$, are uniformly equicontinuous on $\Omega$.
Then these partials converge in norm, locally uniformly on $\Omega$.
\end{prop}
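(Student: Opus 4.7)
The plan is induction on $k$, with the heart of the matter at the base case $k=1$. Write $P$ for the norm-uniform limit of $P_j$ provided by hypothesis.

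Base case $k=1$. Fix a coordinate direction $\mu$ and a compact $K\subset\Omega$. To apply the Banach-space-valued Arzel\`a--Ascoli theorem to $\{\partial_\mu P_j\}$ on $K$, equicontinuity is given, so the only issue is pointwise boundedness. Using equicontinuity pick $\delta>0$ with $\|\partial_\mu P_j(x+te_\mu)-\partial_\mu P_j(x)\|<1$ for every $j$, every $x\in K$, and every $|t|\le\delta$, where $\delta$ is small enough that the relevant segments lie in $\Omega$. The fundamental theorem of calculus in a Banach space then yields
\[
P_j(x+\delta e_\mu)-P_j(x)=\int_0^\delta \partial_\mu P_j(x+te_\mu)\,dt,
\]
whence $\delta\|\partial_\mu P_j(x)\|\le\|P_j(x+\delta e_\mu)-P_j(x)\|+\delta$; the right side is bounded in $j$ because $P_j\to P$ uniformly. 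Arzel\`a--Ascoli on $K$, combined with a diagonal argument through a compact exhaustion of $\Omega$, produces a subsequence $\partial_\mu P_{j_n}$ converging locally uniformly on $\Omega$ to some continuous $Q_\mu$. Passing to the limit in the displayed identity gives $P(x+he_\mu)-P(x)=\int_0^h Q_\mu(x+te_\mu)\,dt$ for small $h$, and continuity of $Q_\mu$ forces $\partial_\mu P=Q_\mu$. Since every subsequential limit is pinned to $\partial_\mu P$, the whole sequence $\partial_\mu P_j$ converges to $\partial_\mu P$ locally uniformly.

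Inductive step $k-1\Rightarrow k$. Apply the base case to conclude that each $\partial_\mu P_j$ converges locally uniformly on $\Omega$. Fix any $\Omega'\Subset\Omega$; on $\Omega'$ the maps $\partial_\mu P_j$ are $C^{k-1}$, converge uniformly (by compactness of $\overline{\Omega'}$), and their partials of order $\le k-1$---which are partials of $P_j$ of order $\le k$---remain uniformly equicontinuous. The inductive hypothesis applied to $\partial_\mu P_j$ on $\Omega'$ delivers locally uniform convergence on $\Omega'$ of all their partials of order $\le k-1$. As $\mu$ and $\Omega'$ are arbitrary, every partial of $P_j$ of order at most $k$ converges locally uniformly on $\Omega$.

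Main obstacle. The only nontrivial point is converting equicontinuity of $\partial_\mu P_j$ plus uniform convergence of $P_j$ into pointwise boundedness of $\partial_\mu P_j$; without this, Arzel\`a--Ascoli has no grip. The integral identity above is the lever. Once uniqueness of subsequential limits is settled via the standard Banach-valued fact that uniform convergence of functions together with uniform convergence of their derivatives implies differentiability of the limit with the expected derivative, nothing further stands in the way.
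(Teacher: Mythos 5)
Your base case has a genuine gap at the application of Arzel\`a--Ascoli. The Banach-space-valued Arzel\`a--Ascoli theorem requires, in addition to equicontinuity, that the set $\{\partial_\mu P_j(x)\colon j\in\bN\}$ be relatively compact in $W$ for each $x\in K$. You establish only that it is \emph{bounded}, and since $W$ is an arbitrary Banach space (in the intended applications $W=\End\,V$ with $\dim V=\infty$), boundedness does not imply precompactness --- closed bounded sets in an infinite-dimensional Banach space are never compact. So Arzel\`a--Ascoli gives you no subsequence, and the rest of the base case collapses.

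The irony is that you already hold the right lever --- the integral identity
\[
P_j(x+\delta e_\mu)-P_j(x)=\int_0^\delta \partial_\mu P_j(x+te_\mu)\,dt
\]
--- but you pull it in the wrong direction. Instead of using it to get boundedness and then hoping for compactness, use it to show $\{\partial_\mu P_j\}$ is uniformly Cauchy on $K$, which is exactly the paper's route. Concretely: given $\var>0$, equicontinuity gives a $\delta$ so that the difference quotient $\delta^{-1}\bigl(P_j(x+\delta e_\mu)-P_j(x)\bigr)$ approximates $\partial_\mu P_j(x)$ within $\var/4$, uniformly in $j$ and $x\in K$; then uniform convergence of $P_j$ makes these difference quotients Cauchy in $j$, uniformly in $x$; combining the two estimates yields $\|\partial_\mu P_i(x)-\partial_\mu P_j(x)\|<\var$ for $i,j$ large. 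This bypasses compactness entirely and, as a bonus, you get convergence of the whole sequence in one stroke, with no need for the subsequential-limit-uniqueness cleanup you sketch at the end. The inductive step you propose is fine and matches the paper.
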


\begin{proof}First let $k=1$; say, we want to prove that $\partial_1 P_j$ converges locally uniformly.
Given a compact $K\subset\Omega$ and $\var>0$, choose $0<\delta<\dist (K,\partial\Omega)$ so that $\|\partial_1 P_j(x)-\partial_1 P_j(y)\|<\var/4$ when $j\in\bN$, $x\in K$, and $|x-y|\leq\delta$.
When $x=(x_1,\ldots,x_m)\in K$
\begin{multline}
\Big\|{P_j(x_1+\delta,x_2,\ldots,x_m)-P_j(x)\over \delta}-\partial_1 P_j(x)\Big\|=\\
{1\over\delta}\Big\| \int_0^\delta \big(\partial_1 P_j (x_1+t,x_2,\ldots,x_m)-\partial_1 P_j(x)\big)dt\Big\| < {\var\over 4}.
\end{multline}
Next choose $j_0$ so that for $i,j>j_0$ and $y\in\Omega$
\[
\|P_i(y)-P_j(y)\|<\delta\var/4.
\]
By (2.3), if $x\in K$, $\xi=(x_1+\delta,x_2,\ldots,x_n)$, and $i,j>j_0$ 
\[
\|\partial_1 P_i(x)-\partial_1 P_j(x)\|<\delta^{-1} (\|P_i(\xi)-P_j(\xi)\|+\|P_i(x)-P_j(x)\|)+\var/2<\var.
\]
Therefore $\partial_1 P_j$ indeed converges locally uniformly, as do all other first partials.

The case $k>1$ follows by induction.
\end{proof}

All the smoothness classes that we have introduced are invariant under $C^\infty$ diffeomorphisms $\Omega\to\Omega'$.
For this reason they make sense over differential manifolds $M$ as well.
The corresponding spaces of functions will be denoted $C^k_{\weak} (M,\End\, V)$, etc.
They all have variants $C^k(\overline M,\End\, V)$, etc. with $\overline M$ a compact manifold with boundary.
For example, when $k\in (0,\infty)$ is nonintegral, we fix a finite open cover $\overline M=U_1\cup\ldots\cup U_s$ such that each $\overline U_j$ is contained in a  coordinate chart, and let $C^k(\overline M,\End\, V)$ consist of continuous $P\colon\overline M\to\End\, V$ whose restrictions $P|U_j\cap\text{ int} M$ have partials of order $\lf k\rf$, partials $Q$ that satisfy
\begin{equation}
\sup \left\{ {\|Q(x)-Q(y)\|\over |x-y|^{k-\lf k\rf}}\quad\colon\quad x,y\in U_j\cap\text{ int }M\right\}<\infty
\end{equation}
for all $j$. The partials, $x-y$, and its length $|x-y|$ are computed using the coordinates on $\overline U_j$.
This space turns out to be a Banach space if the norm of $P$ is defined as the maximum of $\sup_{x\in\overline M} \|P(x)\|$ and the quantities appearing in (2.4), for all choices $j=1,\ldots,s$ and partial derivatives $Q$. Multiplication in $C^k(\overline M,\End\, V)$ is then continuous, and by rescaling the norm we can turn $C^k(\overline M,\End\, V)$ into  a Banach algebra.

When $S\subset\End\, V$, we write $C^k(M,S)$ etc.~for the space of $P\in C^k(M,\End\, V)$ etc.~that take values in $S$.

\section{A Dirichlet problem}

Given a complex Hilbert space $V$, consider a unital $C^*$ subalgebra $\cA\in\End\, V$.
The most important case is $\cA=\End\, V$.
Denote the unit in $\cA$ by $\mathbf 1$.
We write $\cA^\times \subset\cA$ for the group of invertible elements and $\cA^+\subset\cA^\times$ for the cone of (self adjoint) positive elements.
This latter is not completely consistent with usage in $C^*$ algebra theory, where $\cA^+$ would contain not necessarily invertible elements as well. An equivalent definition of our $\cA^+$ would be the set of self adjoint $S\in\cA$ that satisfy $S\ge\var\Id$ with some $\var>0$ (cf. [C, p. 243, Exercise 8]). 
In this section $\Omega=\{z\in\bC\colon |z| < 1\}$.

\begin{thm}Given $F\in C_{\op}(\partial\Omega,\cA^+)$, there is a holomorphic $H\colon\Omega\to\cA^\times$ such that the function
\[
P=\begin{cases}
H^*H&\text{ on $\Omega$}\\
F&\text{on $\partial\Omega$}\end{cases}
\]
is in $C_{\op} (\overline\Omega,\cA^+)$.
If a holomorphic $K\colon\Omega\to\cA^\times$ also solves the same problem, then $K=UH$ with a unitary $U\in\cA$.
\end{thm}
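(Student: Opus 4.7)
My plan is to prove existence via an operator-valued Szeg\H o / Masani--Wiener factorization carried out inside $\cA$, upgrade the boundary identification to a continuous extension via approximation combined with the maximum principle, and prove uniqueness by showing $KH^{-1}$ is a holomorphic unitary-valued map, hence constant.

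\textbf{Existence.} Since $\partial\Omega$ is compact and $F$ is continuous into the open cone $\cA^+$, we have $\var\mathbf{1}\le F\le M\mathbf{1}$ for some $0<\var\le M$, so $F$, $F^{-1}$ and $\log F$ are bounded and continuous into $\cA$. The classical Masani--Wiener / Helson--Devinatz operator-valued outer factorization then produces a holomorphic $H\colon\Omega\to\End\,V\cap\text{invertibles}$ whose nontangential boundary values satisfy $H^*H=F$ almost everywhere. Each ingredient of the construction (projections onto Hardy subspaces, multiplications, inversions, norm limits) preserves the closed unital $C^*$-subalgebra $\cA$, so specializing to $\cA$-valued data gives $H\colon\Omega\to\cA^\times$.

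To upgrade the a.e.\ boundary identity to continuous extension in $C_{\op}(\overline\Omega,\cA^+)$, I would approximate $F$ uniformly in operator norm by smooth $F_n\in C^\infty(\partial\Omega,\cA^+)$ (for instance by convolving $F$ with scalar mollifiers on $\partial\Omega$, which preserves $\cA$-valuedness, and rescaling to stay in $\cA^+$). For each $F_n$, smoothness and ellipticity of $R^{P_n}=0$ yield a solution $P_n=H_n^*H_n\in C^\infty(\overline\Omega,\cA^+)$. I would then show $P_n\to P$ uniformly on $\overline\Omega$ by comparing $H_n$ and $H_m$ through the holomorphic $U_{nm}=H_nH_m^{-1}\colon\Omega\to\cA^\times$, whose squared operator norm equals that of the positive operator $P_m^{-1/2}P_nP_m^{-1/2}$ and is therefore uniformly close to $1$ near $\partial\Omega$ whenever $F_n,F_m$ are uniformly close. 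Log-subharmonicity of $\|U_{nm}\|$ (Theorem 1.3 applied with the flat metrics $P_n$, $P_m$), together with the symmetric bound on $\|U_{nm}^{-1}\|$, then propagates this estimate into all of $\Omega$, making $(P_n)$ Cauchy in $C_{\op}(\overline\Omega,\cA^+)$; the limit must be the desired continuous $P$.

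\textbf{Uniqueness.} If $K\colon\Omega\to\cA^\times$ is another solution, set $U=KH^{-1}$, holomorphic with values in $\cA^\times$. Since $U^*U=(H^*)^{-1}P_KH^{-1}$ is positive, $\|U\|_{\op}^2$ equals its spectral radius, which in turn equals $\|P_H^{-1/2}P_KP_H^{-1/2}\|_{\op}$. This quantity tends to $1$ as $z\to\partial\Omega$ because $P_H$ and $P_K$ both extend continuously to $F$; the symmetric expression for $\|U^{-1}\|_{\op}^2$ tends to $1$ as well. By Theorem 1.3 applied with the flat metrics $h_H=P_H$ and $h_K=P_K$ (both curvatures vanish, so the hypothesis reads $0\le 0$), both $\log\|U\|$ and $\log\|U^{-1}\|$ are subharmonic on $\Omega$; the maximum principle then forces $\|U\|_{\op},\|U^{-1}\|_{\op}\le 1$ on $\Omega$, making each $U(z)$ unitary. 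From $U^*U=\mathbf{1}$ and $\partial U^*=0$ (adjoint of $\opartial U=0$) one gets $U^*\partial U=0$, hence $\partial U=0$; combined with $\opartial U=0$, $U$ is constant, proving $K=UH$ with $U$ a unitary element of $\cA$.

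\textbf{Main obstacle.} The crux is the continuous boundary extension in the second paragraph: in infinite dimensions, operator-valued outer factorization naturally supplies only weak or $L^\infty$ boundary data, and promoting this to operator-norm continuous extension requires a Harnack-type comparison between two flat hermitian structures, which is exactly what Theorem 1.3 provides. The remaining steps---existence of an $\cA$-valued outer factor and the holomorphic-unitary rigidity for uniqueness---are more standard.
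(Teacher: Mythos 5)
Your \emph{uniqueness} argument is correct and somewhat cleaner than the paper's: you show directly that $U=KH^{-1}$ is unitary-valued on $\Omega$ (via the maximum principle of Theorem 1.3, together with the boundary comparison), and that a holomorphic unitary-valued map is constant. The paper instead first uses Lemma 3.2 to conclude $H^*H=K^*K$ on $\Omega$ and then compares Taylor coefficients at $0$; both routes rest on the same maximum principle. Your approximation-plus-maximum-principle device for obtaining operator-norm continuity up to $\partial\Omega$ is likewise the same mechanism the paper uses (Corollary 3.3), so that step is sound in spirit.

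The serious gap is in \emph{existence}, precisely where you declare things ``standard.'' The Masani--Wiener / Devinatz / Helson factorization theory is formulated for $V$ \emph{separable}, whereas the theorem makes no such hypothesis. More importantly, these classical constructions produce an outer factor valued in some $\Hom(V,W)$ with $W$ another Hilbert space, and it is \emph{not} known that one can take $W=V$, much less that the factor lands in the $C^*$-subalgebra $\cA$; the paper explicitly points this out in the discussion after Lemma 3.5, attributing the $\Hom(V,W)$ issue to Helson's book. Your sentence ``each ingredient of the construction (projections onto Hardy subspaces, multiplications, inversions, norm limits) preserves $\cA$'' is an assertion, not a proof, and the Hardy-space projections live on vector-valued $L^2$ spaces where membership in $\cA$ is not the natural invariant. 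The paper circumvents exactly this by proving an $\cA$-internal Fej\'er--Riesz theorem for Laurent polynomials (Lemma 3.5) via the continuity method, using an implicit-function-theorem perturbation lemma (Lemma 3.4) whose whole point is to keep the factorization inside the Banach algebra $C^k(\overline\Omega,\cA)$. Relatedly, your claim that for smooth $F_n$ ``smoothness and ellipticity of $R^{P_n}=0$ yield a solution'' presupposes solvability of a nonlinear Dirichlet problem that is itself nontrivial (and is essentially what Lemmas 3.4--3.5 establish); invoking ``ellipticity'' here is not a substitute for that argument.
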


From this the existence part of Theorem 1.1 follows because on $\Omega$
\begin{equation}
R^P=\overline\partial (P^{-1} \partial P)=\overline\partial (H^{-1} \partial H)=0.
\end{equation}
In the theorem $H$ itself need not extend continuously to $\overline\Omega$, not even when $\cA=\bC$.
A continuous $f\colon\partial\Omega\to\bR$ whose conjugate function is discontinuous provides a counterexample $F=e^f$.

To prepare the proof we start with the following simple consequence of the maximum principle, our Theorem 1.3 (cf.~[L2, Theorem 1.1 or Theorem 2.4]).

\begin{lem}(a)\ Suppose $P, Q\in C_{\op} (\overline\Omega,\cA^+)$ are in $C_{\op}^2$ over $\Omega$, and their curvature there is 0.
If $P\geq Q$ on $\partial\Omega$, then the same holds on $\Omega$.

(b)\ Suppose $H,K\colon\Omega\to\cA^\times$ are holomorphic, and $H^*H$, $K^*K$ extend to mappings in $C_{\op}(\overline\Omega,\cA^+)$.
If $H^*H\geq K^*K$ on $\partial\Omega$, then the same holds on $\Omega$.
\end{lem}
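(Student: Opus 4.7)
The plan is to prove (a) directly by applying Theorem~1.3 to the identity morphism, with $P,Q$ playing the roles of source and target metrics, and then to deduce (b) as the special case $P=H^*H$, $Q=K^*K$.

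For (a), regard the identity $A=\Id_V$ as a holomorphic bundle homomorphism from $(\Omega\times V,P)$ to $(\Omega\times V,Q)$. Both curvatures $R^P$ and $R^Q$ vanish on $\Omega$, so the curvature-decreasing hypothesis of Theorem~1.3 reduces to $0\le 0$, and we conclude that $\log\|A\|$ is subharmonic on $\Omega$. Substituting $v=P(z)^{-1/2}w$ in the definition of the operator norm yields
\[
\|A(z)\|^{2}\;=\;\sup_{v\neq 0}\frac{\langle Q(z)v,v\rangle}{\langle P(z)v,v\rangle}\;=\;\|P(z)^{-1/2}Q(z)P(z)^{-1/2}\|,
\]
which is continuous on $\oOmega$, since $P,Q\in C_{\op}(\oOmega,\cA^+)$ and both inversion and the positive square root are norm-continuous on $\cA^+$. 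On $\partial\Omega$ the hypothesis $P\geq Q$ translates to $P^{-1/2}QP^{-1/2}\leq\mathbf{1}$, hence $\|A\|\leq 1$ there. The classical maximum principle, applied to the continuous, subharmonic function $\log\|A\|$ on $\oOmega$, forces $\|A\|\leq 1$ throughout $\oOmega$, and the displayed formula unwinds this into $P\geq Q$ on all of $\Omega$.

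For (b), set $P=H^*H$ and $Q=K^*K$. These functions are real-analytic on $\Omega$ and, by hypothesis, extend to elements of $C_{\op}(\oOmega,\cA^+)$. Holomorphy of $H$ gives $P^{-1}\partial P=H^{-1}\partial H$, whence $R^P=\overline\partial(H^{-1}\partial H)=0$; similarly $R^Q=0$. Thus part (a) applies and delivers $H^*H\geq K^*K$ on $\Omega$.

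The main obstacle is the boundary continuity of $\|A\|$: in (b) neither $H$ nor $K$ need extend continuously to $\oOmega$, as the remark following Theorem~3.1 recalls. This is bypassed by working with the gauge-invariant combination $P^{-1/2}QP^{-1/2}$, whose continuous extension to $\oOmega$ is ensured by the norm-continuity of inversion and of the positive square root on $\cA^+$; that continuity is what lets us invoke the scalar maximum principle with a sharp boundary bound.
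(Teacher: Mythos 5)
Your proposal is correct and follows essentially the same route as the paper: apply Theorem~1.3 to the identity homomorphism between the two metrics, note the curvature-decreasing hypothesis is trivially met since both curvatures vanish, conclude $\log\|A\|$ is subharmonic, and close with the boundary bound $\|A\|\le 1$ plus the scalar maximum principle; part (b) then reduces to (a) via $P=H^*H$, $Q=K^*K$. The one place where you supply more detail than the paper is the explicit identification $\|A(z)\|^2=\|P(z)^{-1/2}Q(z)P(z)^{-1/2}\|$ and the observation that this quantity is continuous on $\oOmega$ because $P,Q$ (not $H,K$) extend continuously; this is a worthwhile clarification of why the maximum principle applies with a sharp boundary bound even though $H,K$ need not extend to $\oOmega$.
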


\begin{proof}(a)\ Let $h,k$ be the hermitian metrics on $\Omega\times V\to\Omega$ determined by $P,Q$.
By Theorem 1.3 the norm of the identity map $A(z)=\Id\colon (V,h_z)\to (V,k_z)$ is logarithmically subharmonic.
Since $\|A(z)\|\leq 1$ when $z\in\partial\Omega$, the maximum principle for subharmonic functions implies $\|A(z)\|\leq 1$ for $z\in\Omega$ as well.

(b)\ This follows by setting $P=H^*H$, $Q=K^*K$, both of which have zero curvature, see (3.1).
\end{proof}

\begin{cor}For $j\in\bN$ let $H_j\colon\Omega\to\cA^\times$ be holomorphic.
Suppose that $H_j^*H_j$ extend to mappings in $C_{\op}(\overline\Omega,\cA^+)$.
If $H_j^* H_j|\partial\Omega$ converge in $C_{\op}(\partial\Omega,\cA^+)$, then $H_j^*H_j$ converge in $C_{\op}(\overline\Omega,\cA^+)$.
\end{cor}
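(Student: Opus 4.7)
The plan is to establish Cauchyness of $(P_j)=(H_j^*H_j)$ in $C_{\op}(\overline\Omega,\End\,V)$ by exploiting that positive scalar multiples of a zero curvature metric are again zero curvature metrics:\ $(1\pm\delta)H^*H=(\sqrt{1\pm\delta}\,H)^*(\sqrt{1\pm\delta}\,H)$. This lets one use Lemma 3.2(b) to propagate a multiplicative two sided comparison between $P_i$ and $P_j$ from $\partial\Omega$ to $\overline\Omega$, converting closeness on the boundary into closeness on the whole disc.

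First I would derive uniform two sided bounds on $P_j$. Let $F=\lim P_j|_{\partial\Omega}$. Compactness of $\partial\Omega$ together with $F\in C_{\op}(\partial\Omega,\cA^+)$ gives $c>0$ with $F\ge c\mathbf{1}$, so $P_j\ge (c/2)\mathbf{1}$ on $\partial\Omega$ for $j\ge j_1$. Writing $(c/2)\mathbf{1}=(\sqrt{c/2}\,\Id)^*(\sqrt{c/2}\,\Id)$ and invoking Lemma 3.2(b) extends this lower bound to $\overline\Omega$. Likewise, the convergent sequence $P_j|_{\partial\Omega}$ is uniformly bounded, $P_j\le M\mathbf{1}$ on $\partial\Omega$, and Lemma 3.2(b) in the reverse direction gives $\sup_j\sup_{\overline\Omega}\|P_j\|_{\op}\le M$.

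For the Cauchy step, given $\var\in(0,c/2)$ choose $j_0\ge j_1$ so that $\|P_i-P_j\|_{\op}<\var$ on $\partial\Omega$ for $i,j\ge j_0$. Then $-\var\mathbf{1}\le P_j-P_i\le\var\mathbf{1}$ on $\partial\Omega$, and combining with $P_i\ge(c/2)\mathbf{1}$ there yields $\var\mathbf{1}\le(2\var/c)P_i$, so
$$(1-2\var/c)P_i\ \le\ P_j\ \le\ (1+2\var/c)P_i\quad\text{on }\partial\Omega.$$
Both outer terms are of the form $H^*H$ with $H=\sqrt{1\pm 2\var/c}\,H_i$ holomorphic $\Omega\to\cA^\times$, so Lemma 3.2(b) extends the sandwich to $\overline\Omega$. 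Taking operator norms and using the bound $\|P_i\|_{\op}\le M$ from the previous step gives $\|P_i-P_j\|_{\op}\le 2M\var/c$ uniformly on $\overline\Omega$, so $(P_j)$ is Cauchy and converges to some $P\in C_{\op}(\overline\Omega,\End\,V)$. Since $\cA$ is norm closed, each $P_j$ is self adjoint, and the uniform lower bound $P_j\ge(c/2)\mathbf{1}$ passes to the limit, the limit lies in $C_{\op}(\overline\Omega,\cA^+)$.

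The load bearing step is the sandwich:\ the only substantive observation is that an additive $\var$-perturbation of $P_i$ on the boundary can be absorbed into a multiplicative $(1\pm 2\var/c)$-perturbation, which preserves the form $H^*H$ and hence the hypotheses of Lemma 3.2(b). Once this is in place, the lemma does all the work and no further analysis on $\Omega$ is needed; the rest is bookkeeping of the uniform upper and lower bounds.
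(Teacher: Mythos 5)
Your proof is correct and follows essentially the same route as the paper: the paper also derives uniform two-sided bounds $a\,\mathbf 1\le P_j\le b\,\mathbf 1$ via Lemma 3.2(b), uses the uniform lower bound to convert the additive Cauchy estimate on $\partial\Omega$ into the multiplicative sandwich $(1-\var)P_i\le P_j\le(1+\var)P_i$, and then applies Lemma 3.2(b) again to propagate it to $\overline\Omega$. You have simply made explicit the step the paper condenses into ``In particular, this implies\dots''
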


\begin{proof}There are positive numbers $a,b$ such that $a\,\Id\le H_j^*H_j\le b\,\Id$  on $\partial\Omega$ for all $j$, whence by Lemma 3.2b also on $\Omega$. In particular, this implies that for any $\var>0$ there is a $j_0$ such that for $i,j>j_0$ 
\[
(1-\var) H_i^* H_i\leq H_j^* H_j\leq (1+\var) H_i^* H_i\qquad\text{ on }\partial\Omega.
\]
By Lemma 3.2b again, the same holds on $\overline\Omega$, whence the claim.
\end{proof}

Next we prove a perturbative result in the spirit of Theorem 3.1.

\begin{lem}Suppose $k\in (0,\infty)$ is not an integer.
Any constant map $F_0\colon\partial\Omega\to\cA^+$ has a neighborhood $\cU\subset C^k (\partial\Omega,\cA^+)$ such that whenever $F\in\cU$, there is an $H\in C^k (\overline\Omega,\cA^\times)$ that is holomorphic on $\Omega$ and satisfies $H^*H=F$ on $\partial\Omega$.
\end{lem}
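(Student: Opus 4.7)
The plan is to apply the inverse function theorem in Banach spaces. Setting $C_0=F_0^{1/2}\in\cA^+$ (so $C_0=C_0^*$ by the $C^*$-calculus), I would seek $H$ as a small holomorphic perturbation of the constant $C_0$. Let $\cA_{sa}\subset\cA$ denote the real Banach subspace of self-adjoint elements, and form the closed real Banach subspace
\[
{\cal X}=\bigl\{H\in C^k(\overline\Omega,\cA):H\text{ is holomorphic on }\Omega\text{ and }C_0H(0)\in\cA_{sa}\bigr\}
\]
of $C^k(\overline\Omega,\cA)$. Define $\Phi\colon{\cal X}\to C^k(\partial\Omega,\cA_{sa})$ by $\Phi(H)=H^*H|_{\partial\Omega}$. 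This map is smooth (it is a real quadratic polynomial in the Banach algebra $C^k(\overline\Omega,\cA)$ composed with the continuous restriction to $\partial\Omega$), and $\Phi(C_0)=F_0$. The role of the linear condition $C_0H(0)\in\cA_{sa}$ is to eliminate the gauge freedom $H\mapsto UH$ coming from unitaries $U\in\cA$, so as to make the derivative bijective.

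The key step is to show that the Fréchet derivative
\[
D\Phi_{C_0}(A)=\bigl(A^*C_0+C_0A\bigr)\big|_{\partial\Omega}=\bigl(C_0A+(C_0A)^*\bigr)\big|_{\partial\Omega}
\]
is a Banach space isomorphism from ${\cal X}$ onto $C^k(\partial\Omega,\cA_{sa})$. For surjectivity, given $B\in C^k(\partial\Omega,\cA_{sa})$ I would produce a holomorphic $G\in C^k(\overline\Omega,\cA)$ with $\text{Re}\,G=B/2$ on $\partial\Omega$ and $\text{Im}\,G(0)=0$. Take $G=G_1+iG_2$, where $G_1$ is the Poisson integral of $B/2$ and $G_2$ is its harmonic conjugate normalized by $G_2(0)=0$. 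Since $k\notin\bN$, classical Schauder/Zygmund theory gives the boundedness of the Poisson and conjugation operators from $C^k(\partial\Omega,\cA)$ to $C^k(\overline\Omega,\cA)$; these pointwise estimates transfer verbatim from scalars to $\cA$-valued data since they depend only on the norm of the target. Setting $A:=C_0^{-1}G\in C^k(\overline\Omega,\cA)$, I have $D\Phi_{C_0}(A)=G+G^*=B$ on $\partial\Omega$ and $C_0A(0)=G(0)=G_1(0)\in\cA_{sa}$, so $A\in{\cal X}$. For injectivity, if $A\in{\cal X}$ and $D\Phi_{C_0}(A)=0$, set $G=C_0A$; then $\text{Re}\,G$ is harmonic on $\Omega$ and vanishes on $\partial\Omega$, so $\text{Re}\,G\equiv 0$, forcing the holomorphic $G$ to be a constant lying in $i\cA_{sa}$. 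The defining constraint of ${\cal X}$ then demands $G(0)=C_0A(0)\in\cA_{sa}\cap i\cA_{sa}=\{0\}$, so $A\equiv 0$.

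With $D\Phi_{C_0}$ a Banach space isomorphism, the inverse function theorem produces a neighborhood ${\cal V}$ of $F_0$ in $C^k(\partial\Omega,\cA_{sa})$ and a smooth local inverse $\Psi\colon{\cal V}\to{\cal X}$. Shrinking ${\cal V}$, the continuity of $\Psi$ and the openness of $\cA^\times\subset\cA$ guarantee that $\Psi(F)(z)\in\cA^\times$ for every $F\in{\cal V}$ and $z\in\overline\Omega$ (since $\Psi(F_0)=C_0\in\cA^+\subset\cA^\times$). Taking $\cU={\cal V}\cap C^k(\partial\Omega,\cA^+)$, which is open because $\cA^+$ is open in $\cA_{sa}$, yields the desired neighborhood: for $F\in\cU$, the function $H=\Psi(F)$ lies in $C^k(\overline\Omega,\cA^\times)$, is holomorphic on $\Omega$, and satisfies $H^*H=F$ on $\partial\Omega$. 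The main obstacle is the regularity claim in Step 2, namely the $C^k$ boundedness of the Poisson integral and harmonic conjugate for $\cA$-valued boundary data when $k\notin\bN$; once the classical scalar statements are granted, the operator-valued version follows by the same real-variable estimates, or alternatively, by appealing to Proposition 2.1 applied to the pairings $\langle G(\cdot)v,w\rangle$ with $v,w\in V$.
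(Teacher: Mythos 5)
Your proof is correct and follows essentially the same route as the paper: both set up a smooth quadratic map $H\mapsto H^*H|_{\partial\Omega}$ on a gauge-fixed Banach subspace of holomorphic $C^k$ maps, show its linearization $A\mapsto (C_0A+(C_0A)^*)|_{\partial\Omega}$ is a Banach isomorphism (injectivity from the maximum/uniqueness principle for harmonic functions, surjectivity from Privalov-type $C^k$ boundedness of the Poisson/conjugation operators, which is precisely where $k\notin\bN$ is used), and then invoke the inverse/implicit function theorem. The only difference is cosmetic: you fix the gauge by requiring $C_0H(0)$ self-adjoint at the interior point $0$, while the paper first normalizes $F_0\equiv\mathbf 1$ and kills the ambiguity by requiring $h(1)=0$ at the boundary point $1$, correspondingly restricting the target to forms vanishing there; both choices remove exactly the unitary gauge freedom and make the linearization bijective.
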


\begin{proof}It suffices to prove when $F_0\equiv\mathbf 1$.
Consider
\begin{eqnarray*}
A&=&\{h\in C^k (\overline\Omega,\cA)\colon h|\Omega\text{ is holomorphic}\},\\
B&=&\{f\in C^k(\partial\Omega,\cA)\colon f=f^*\},
\end{eqnarray*}
and let $A_0\subset A$, $B_0\subset B$ consist of maps that vanish at $1\in\partial\Omega$.
With their $C^k$ norms these are Banach spaces. As indicated in Section 2, the $C^k$ norms on $A$, suitably normalized, turn $A$ into a Banach algebra; $A_0$ is a  subalgebra.
We claim that the smooth map
\begin{equation}
A_0\ni h\mapsto (\mathbf 1+h^*) (\mathbf 1+h)|\partial\Omega-\mathbf 1=(h^*+h+h^*h)|\partial\Omega\in B_0
\end{equation}
restricts to a diffeomorphism between certain neighborhoods of $0\in A_0$ and $0\in B_0$.

To justify the claim note that the linearization of (3.2) at $h=0$ is the map
\begin{equation}
A_0\ni h\mapsto (h^*+h)|\partial\Omega\in B_0.
\end{equation}
This is clearly injective:\ \ if the harmonic function $h^*+h$ vanishes on $\partial\Omega$, then it vanishes on all of $\overline\Omega$, whence $h=-h^*$ is both holomorphic and antiholomorphic on $\Omega$.
Therefore $h$ is constant, $h\equiv h(1)=0$.
(3.3) is also surjective for the following reason.
Let $f\in B_0$.
Schwarz's formula
\[
s(z)={1\over 4\pi}\int_0^{2\pi}\ {e^{it}+z\over e^{it}-z}\ f(e^{it}),\quad z\in\Omega,
\]
defines a holomorphic function $s\colon\Omega\to \cA$, that by Privalov's theorem, see [P], extends to a function in $C^k(\overline\Omega,\cA)$.
Thus
\[
s(z)^*+s(z)={1\over 2\pi}\ \int_0^{2\pi}\text{ Re }{e^{it}+z\over e^{it}-z}\ f(e^{it}) dt.
\]
The kernel in the integral being Poisson's, we see that the extension of $s$ to $\partial\Omega$ satisfies $s^*+s=f$.
It follows that $h=s-s(1)\in A_0$ also satisfies $(h^*+h)|\partial\Omega=f$.

So (3.3) is an isomorphism, and by the implicit function theorem (3.2) is indeed a diffeomorphism between neighborhoods of $0\in A_0$ and $0\in B_0$.
Hence, if $F\in B$ is sufficiently close to $F_0\equiv\mathbf 1$, i.e.~if $f=F(1)^{-1/2}FF(1)^{-1/2}-\mathbf 1\in B_0$ is sufficiently small, there is an $h\in A_0$ such that $H=(\mathbf 1+h)F(1)^{1/2}\in A$ satisfies $H^*H|\partial\Omega=F$.
Since $H$ maps into $\cA^\times$ if $h$ is small, the lemma is proved.
\end{proof}

Based on this lemma we will prove Theorem 3.1 by the continuity method.
We can avoid tricky a priori estimates by first proving a generalization of a theorem of Fej\'er and Riesz.

\begin{lem}If $F\in C_{\op}(\partial\Omega,\cA^+)$ is a Laurent polynomial 
\begin{equation}
F(z)=\sum^N_{n=-N}\ F_n z^n,\qquad F_n\in\cA,
\end{equation}
then there are $H_n\in\cA$, $0\leq n\leq N$ such that
\[
H(z)=\sum^N_{n=0} H_n z^n
\]
takes invertible values for $z\in\oOmega$ and satisfies $H^*H=F$ on $\partial\Omega$.
It can be arranged that $H_0\in\cA^+$.
\end{lem}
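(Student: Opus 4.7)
The plan is a continuity method within the real Banach space $\mathcal L_N$ of self-adjoint Laurent polynomials $F=\sum_{n=-N}^N F_n z^n$ with coefficients in $\cA$ (so $F_{-n}=F_n^*$); set $\mathcal L_N^+=\mathcal L_N\cap C_{\op}(\partial\Omega,\cA^+)$, an open convex cone. Given $F\in\mathcal L_N^+$ with $F\ge\var\mathbf 1$, I would connect $\var\mathbf 1$ to $F$ by the affine path $F_t=(1-t)\var\mathbf 1+tF\in\mathcal L_N^+$, and let $T\subset[0,1]$ be the set of $t$ for which $F_t$ admits the desired factorization. Then $0\in T$ (via $H\equiv\sqrt\var\,\mathbf 1$), so the task reduces to proving $T$ is open and closed in $[0,1]$.

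For openness at $t_0\in T$ with factor $H$: I would parametrize polynomial factors by $\cA^{N+1}$ via $(K_0,\ldots,K_N)\mapsto K(z)=\sum_0^N K_n z^n$ and consider the smooth map $\Phi(K)=K^*K|_{\partial\Omega}\in\mathcal L_N$ (well-defined because the exponents of $K^*K$ on $\partial\Omega$ lie in $\{-N,\ldots,N\}$). Its derivative at $H$, $\delta\mapsto\delta^*H+H^*\delta$, has kernel equal to the infinitesimal unitary gauge: setting $A=\delta H^{-1}$ (holomorphic on a neighborhood of $\overline\Omega$ since $H\in\cA^\times$ there), the kernel condition says $A$ is skew-adjoint on $\partial\Omega$, hence $A+A^*$ is harmonic, vanishes on $\partial\Omega$, and so vanishes on $\overline\Omega$, forcing $A$ to be simultaneously holomorphic and antiholomorphic and therefore a constant skew-adjoint element $i\alpha\in i\cA_{sa}$, with $\delta=i\alpha H$. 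Surjectivity modulo this kernel I would check first at $H\equiv\mathbf 1$ (the equation $\delta+\delta^*=\xi$ is solved by $\delta_0=\xi_0/2$, $\delta_n=\xi_n$ for $1\le n\le N$) and then transport to general $H$ via an explicit triangular cascade in the Fourier coefficients, starting from the $z^N$ equation $H_0^*\delta_N+\delta_0^*H_N=\xi_N$ and solving each lower mode for $\delta_{N-k}$ in turn---each step requiring only invertibility of $H_0=H(0)\in\cA^\times$. The implicit function theorem then furnishes polynomial factors for every $F'\in\mathcal L_N^+$ near $F_{t_0}$, with invertibility on $\overline\Omega$ preserved for small perturbations and $K_0\in\cA^+$ arranged by a constant unitary rotation.

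For closedness I would take $t_j\to t^*$ in $T$ with factors $H_{t_j}$. Uniform bounds $a\mathbf 1\le F_{t_j}\le b\mathbf 1$ on $\partial\Omega$, combined with subharmonicity of $\|H_{t_j}\|^2$ and of $\|H_{t_j}^{-1}\|^2$ on $\overline\Omega$ (the latter valid because the polynomial $H_{t_j}$ is pointwise invertible on all of $\overline\Omega$, so $H_{t_j}^{-1}$ is holomorphic there), yield $\|H_{t_j}\|,\|H_{t_j}^{-1}\|\le C$ uniformly on $\overline\Omega$; Cauchy estimates then bound each coefficient $\|H_{t_j,n}\|_\cA$. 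The openness argument applied along the compact path $\{F_t\}_{t\in[0,1]}$ furnishes uniformly-sized local norm-continuous inverses of $\Phi$ modulo gauge; finitely many such charts cover the path, and a consistent gauge choice (e.g., keeping $H_{t,0}\in\cA^+$) makes $(H_{t_j})$ a Cauchy sequence in $\cA^{N+1}$, whose limit is the required polynomial factor of $F_{t^*}$.

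The main obstacle I anticipate is the algebraic surjectivity of $D\Phi_H$ modulo gauge for general $H$, which underwrites the openness step; this reduces via the triangular cascade to repeatedly inverting structured linear equations over $\cA$, with the decisive ingredient being invertibility of $H_0$. A secondary delicate point is the operator-norm compactness in the closedness step, which I would not attempt via any general weak-compactness argument in an arbitrary $C^*$-subalgebra, but rather handle by exploiting the locally Lipschitz norm-dependence of the factor on the symbol coming out of the openness step.
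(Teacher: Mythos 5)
Your overall strategy—a continuity method in the convex cone of positive Laurent polynomials—is the same as the paper's, but you deviate in where you run the implicit function theorem, and that is precisely where a genuine gap opens up.

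The paper (Lemma 3.4) sets up the implicit function theorem in the Banach space $A_0$ of \emph{all} holomorphic $\cA$-valued functions of class $C^k(\overline\Omega)$ vanishing at $1$, where surjectivity of the linearization $h\mapsto (h^*+h)|_{\partial\Omega}$ is proved by Schwarz's formula and Privalov's theorem. Only \emph{afterwards} does it observe (by comparing negative Fourier modes of $H^*(rz)-\Phi_t(rz)H(rz)^{-1}$ as $r\nearrow1$) that any $C^k$ factor of a Laurent polynomial $\Phi_t$ is automatically a polynomial of degree $\leq N$. You instead restrict to polynomial factors $\cA^{N+1}$ from the outset and must therefore prove that the linearization
\[
\delta\longmapsto (\delta^*H+H^*\delta)\big|_{\partial\Omega}
\]
is surjective from polynomials of degree $\leq N$ onto $\mathcal L_N$ at a \emph{general} polynomial $H$. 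You assert this follows by ``an explicit triangular cascade,'' but the system is not triangular: the $z^k$ coefficient equation is $\sum_{n=0}^{N-k}(\delta_n^*H_{n+k}+H_n^*\delta_{n+k})=\xi_k$, and for $k$ near $N/2$ it couples unknowns from both ends ($\delta_0,\dots,\delta_{N-k}$ together with $\delta_k,\dots,\delta_N$). Even in the case $N=1$ the cascade reduces not to successive applications of $H_0^{-1}$ but to a Sylvester-type equation $\gamma-\beta^*\gamma\beta=\eta$ with $\beta=H_0^{-1}H_1$, solvable only because the invertibility of $H$ on $\overline\Omega$ forces the \emph{spectral radius} $r(\beta)<1$; this is already a nontrivial fact, and no analogue is offered for general $N$. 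Moreover, if one tries to transfer the solvability from the big $C^k$ space by truncation, the truncated $\delta$ no longer satisfies the linearized equation, because a general $C^k$ solution need not be polynomial. So the openness step is not established.

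The closedness argument also has a circularity as written: you propose to cover the whole path $\{F_t\}_{t\in[0,1]}$ by ``uniformly-sized local norm-continuous inverses of $\Phi$'' furnished by the openness argument, but the implicit function theorem only yields such charts at points where a factorization is already known to exist, which is exactly what is to be proved. Your a priori bounds on $\|H_{t_j}\|$ and $\|H_{t_j}^{-1}\|$ via subharmonicity of $\|H\|^2$ and $\|H^{-1}\|^2$ are correct, but bounded sequences in $\cA^{N+1}$ have no norm-convergent subsequences when $\dim V=\infty$, so boundedness alone does not close the argument. The paper sidesteps this cleanly (Corollary 3.3): the sesquilinear forms $H^{(\nu)*}H^{(\nu)}$ converge uniformly on $\overline\Omega$, the monomials $\overline z^n z^m$ ($0\le n,m\le N$) are linearly independent over $\bC$, hence each product $H_{\nu n}^*H_{\nu m}$ converges; with the normalization $H_{\nu 0}\in\cA^+$, the square root gives $H_{\nu 0}\to H_0\in\cA^+$, and then $H_{\nu n}=H_{\nu 0}^{*-1}(H_{\nu 0}^*H_{\nu n})$ converges. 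This extraction of norm limits from the convergence of the quadratic forms is the key device you are missing in the closedness step, and it requires no compactness.

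In short: the skeleton (openness/closedness of $T\subset[0,1]$, normalization $H_0\in\cA^+$, a priori bounds from Lemma 3.2 / Corollary 3.3, invertibility of the limit via Lemma 3.6) matches the paper, but the claimed surjectivity of the linearization over polynomial coefficients is unsubstantiated and most likely requires the passage through $C^k(\overline\Omega,\cA)$ via Schwarz--Privalov, which also removes the circularity from the closedness step.
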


Helson [He, p.~127] proposes a theorem that, when $V$ is separable and $\cA=\End\, V$, would be the same as our lemma, if one could show that Helson's ``outer factor'' is a function valued in $\End\, V$, not in some other space $\Hom(V,W)$.
Subsequently Rosenblum in [Rs] proved
Lemma 3.5, again when $V$ is separable and $\cA=\End\, V$; in his version $F$ need not even take invertible values.
Both Helson and Rosenblum first prove a general factorization theorem from which they derive the polynomial case.
By contrast, in our approach the polynomial factorization comes first.

We will use the following simple fact.

\begin{lem}Suppose that a sequence of invertible $C_k\in\End\, V$ converges in norm to $C$. Then the following are equivalent:

(a) $C$ is invertible;

(b) $\sup_k ||C_k^{-1}||=s<\infty$;

(c) $C^*C$ is invertible.
\end{lem}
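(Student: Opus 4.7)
The plan is to verify (a)$\Leftrightarrow$(b), deduce (a)$\Rightarrow$(c) trivially, and close the circle with (c)$\Rightarrow$(b).

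First I would establish (a)$\Rightarrow$(b) by the standard Neumann series argument: once $C$ is invertible, choose $k_0$ so large that $\|C^{-1}(C_k-C)\|<1/2$ for $k\geq k_0$. Then the factorization $C_k=C\bigl(\mathbf{1}+C^{-1}(C_k-C)\bigr)$ gives $\|C_k^{-1}\|\leq 2\|C^{-1}\|$ for such $k$, while the remaining finitely many $C_k^{-1}$ are bounded automatically. For the converse (b)$\Rightarrow$(a), I would argue symmetrically: pick $k$ with $\|C-C_k\|<1/s$, so that $\|C_k^{-1}(C-C_k)\|<1$; then $\mathbf{1}+C_k^{-1}(C-C_k)$ is invertible by Neumann series, and $C=C_k\bigl(\mathbf{1}+C_k^{-1}(C-C_k)\bigr)$ is a product of invertibles.

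The implication (a)$\Rightarrow$(c) is immediate since invertibility of $C$ forces invertibility of $C^*$, with $(C^*)^{-1}=(C^{-1})^*$, and hence of the product $C^*C$.

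The only substantive step is (c)$\Rightarrow$(b). Here I would apply the already established (a)$\Rightarrow$(b) to the auxiliary sequence of invertible positive operators $C_k^*C_k$, which converges in norm to the invertible $C^*C$. This yields $\sup_k\|(C_k^*C_k)^{-1}\|<\infty$; combining the identity $C_k^{-1}=(C_k^*C_k)^{-1}C_k^*$, valid because each $C_k$ is invertible, with the bound $\|C_k^*\|=\|C_k\|\to\|C\|$ then produces $\sup_k\|C_k^{-1}\|<\infty$, which is (b). I do not anticipate a real obstacle; the only organizational point to get right is that (a)$\Rightarrow$(b) must be in place before it can be reused on the auxiliary sequence in this final step.
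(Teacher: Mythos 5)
Your proof is correct and follows essentially the same route as the paper: establish (a)$\Leftrightarrow$(b) by elementary norm estimates and then handle (c) by applying that equivalence to the auxiliary sequence $C_k^*C_k$. The only mechanical difference is in (b)$\Rightarrow$(a), where you invert $C$ directly via the Neumann-series factorization $C=C_k\bigl(\mathbf{1}+C_k^{-1}(C-C_k)\bigr)$ for one well-chosen $k$, while the paper shows $(C_k^{-1})$ is Cauchy via $\|C_k^{-1}-C_l^{-1}\|\le s^2\|C_l-C_k\|$; both are standard and equally short.
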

\begin{proof} (a) $\Rightarrow$ (b): There is a positive number $c$ such that $||Cv||\ge c||v||$ for $v\in V$. Hence there is a $k_0$ such that $||C_kv||\ge c||v||/2$ when $k>k_0$, i.e., $||C_k^{-1}||\le 2/c$, and indeed $s<\infty$.

(b) $\Rightarrow$ (a): Since
\[
||C_k^{-1}-C_l^{-1}||=||C_k^{-1}(C_l-C_k)C_l^{-1}||\le s^2||C_l-C_k||\to 0 \qquad\text{as }k,l\to\infty,
\]
$D=\lim_k C_k^{-1}$ exists and satisfies $CD=DC=\Id$.

The equivalence (c) $\Leftrightarrow$ (b) is just (a) $\Leftrightarrow$ (b) applied with $C_k'=C_k^*C_k$.
\end{proof}

\begin{proof}[Proof of Lemma 3.5]Fix a nonintegral $k\in (0,\infty)$.
We will use the spaces $A_0\subset A$, $B_0\subset B$ introduced in the proof of Lemma 3.4.
It suffices to prove when $F(1)=\mathbf 1$, for a general $F$ can be normalized to $F(1)^{-1/2} FF(1)^{-1/2}$.
With such $F$ let $\Phi_t=(1-t)\mathbf 1+tF$, with $t\in [0,1]$, and consider the set $T\subset [0,1]$ of those $t$ for which $\Phi_t$ can be written $H^*H|\partial\Omega$, where $H\in C^k(\oOmega,\cA^\times)$ is holomorphic on $\Omega$.

One can modify the requirement on $H$ without affecting $T$.
For example one can require that $H(z)=\sum_0^N H_n z^n$ be a polynomial.
This in fact is automatic for the following reason.
Let $H(z)=\sum_0^\infty H_n z^n$ and $H(z)^{-1}=\sum_0^\infty K_n z^n$ for $z\in\Omega$, and
\begin{equation}
\Phi_t (z)=\sum^N_{-N} G_n z^n\qquad\text{ for }z\in\partial\Omega.
\end{equation}
If we use (3.5) to extend $\Phi_t(z)$ to all $z\in\bC\backslash \{0\}$, as $r\nearrow 1$ we have 
\[
H^*(rz) H(rz)-\Phi_t (rz)\to 0,\qquad\text{ or }\qquad H^*(rz)-\Phi_t (rz) H(rz)^{-1}\to 0
\]
in $\cA$, uniformly for $z\in\partial\Omega$.
The second limit translates to
\[
\sum^\infty_0 H_n^* r^n z^{-n} -\sum^N_{-N} G_n r^n z^n\sum_0^\infty K_n r^n z^n\to 0 ,\quad r\nearrow 1.
\]
The second term here contains no power $z^m$ with $m<-N$, hence the same holds for the first sum, i.e., $H_n=0$ when $n>N$, and indeed $H(z)=\sum_0^N H_n z^n$.

One can also add the requirement that $H(0)\in\cA^+$, because from a general $H$ one can pass to $UH$, where $U=(H(0)^* H(0))^{1/2} H(0)^{-1}\in\cA^\times$.

Now, $T\subset [0,1]$ is open.
Indeed, if $t\in T$ and $H$ is as in the definition of $T$,
\[
B\ni H^{*-1} \Phi_s H^{-1}|\partial\Omega-\mathbf 1\to 0\quad\text{ as }s\to t.
\]
Hence for $s$ close to $t$, by Lemma 3.4 we can write 
\[
H^{*-1}\Phi_s H^{-1}=K^* K\quad\text{ on }\partial\Omega,
\]
with $K\in C^k(\oOmega,\cA^\times)$, holomorphic on $\Omega$.
Since $\Phi_s=(KH)^*(KH)$, such $s$ is indeed in $T$.

But $T$ is also closed.
For suppose $t_\nu\in T$ and $\lim t_\nu=t$.
As seen above, the corresponding factorization of $\Phi_{t_\nu}$ will be 
\[
\Phi_{t_\nu}=H^{(\nu)*} H^{(\nu)}|\partial\Omega,\qquad H^{(\nu)}(z)=\sum_0^N H_{\nu n} z^n,
\]
and we can assume $H^{(\nu)}(0)=H_{\nu 0}\in\cA^+$.
By Corollary 3.3,
\[
H^{(\nu)}(z)^* H^{(\nu)}(z)=\sum^N_{n,m=0} H_{\nu n}^* H_{\nu m} \overline z^n z^m
\]
converge for every $z\in\oOmega$ as $\nu\to\infty$.
The limits are in $\cA^+$.
Since the functions $\overline z^n z^m$ are independent over $\bC$, this implies $H_{\nu n}^* H_{\nu m}\in\cA$ converge for every $n,m$.
In particular, $\lim_\nu H_{\nu 0}=\lim_\nu (H_{\nu 0}^* H_{\nu 0})^{1/2}=H_0\in\cA^+$ exists.
Therefore
\[
\lim_\nu H_{\nu n}=\lim_\nu H_{\nu 0}^{*-1} (H_{\nu 0}^* H_{\nu n})=H_n\in\cA
\]
also exists for $n=0,\ldots,N$; convergence is in operator norm.
Thus $H(z)=\sum_0^N H_n z^n$ satisfies
\[
H(z)^*H(z)\begin{cases}=\Phi_t(z),&\text{$z\in\partial\Omega$}\\
\in\cA,&\text{$z\in\Omega$}.\end{cases}
\]
But $H(z)^*H(z)$ is invertible for $z\in\oOmega$, hence by Lemma 3.6 so is $H(z)$. This shows $t\in T$, and $T$ is closed.

What we have proved about $T$ implies $T=[0,1]$, in particular $F=\Phi_1$ has the required factorization.
\end{proof}

\begin{proof}[ Proof of Theorem 3.1] To construct $H$ choose a sequence $F_\nu\colon\partial\Omega\to \cA^+$ of Laurent polynomials converging uniformly to $F$, and construct factorizations $H_\nu^* H_\nu|\partial\Omega=F_\nu$ as in Lemma 3.5, making sure that $H_\nu(0)\in\cA^+$.
By Corollary 3.3 $H_\nu^* H_\nu$ converge uniformly on $\overline\Omega$ to some $P\in C_{\op}(\oOmega,\cA^+)$, and $P|\partial\Omega=F$.
In particular the norms $\|H_\nu(z)\|$ are uniformly bounded, say $\|H_\nu(z)\|\leq C$ for $z\in\oOmega$ and $\nu\in\bN$.
By Cauchy's estimate
\begin{equation}
\big\| {\partial^k H_\nu(z)\over\partial z^k}\big\| \leq {Ck\text{!}\over (1-|z|)^k},\quad k=0,1,\ldots,\quad z\in\Omega.
\end{equation}
This in turn implies that the partials of $P_\nu=H_\nu^*H_\nu$, of any fixed order, are locally uniformly bounded on $\Omega$.
By Lemma 2.5 these partials converge locally uniformly as $\nu\to\infty$.
In particular
\begin{equation}
\lim_\nu P_\nu(0)^{-1/2} {\partial^k P_\nu\over\partial z^k}(0)=\lim_\nu \ {\partial^k H_\nu\over\partial z^k} (0)=A_k\in\cA
\end{equation}
exists.
In view of (3.6), $\|A_k\|\leq Ck$!, so that
\[
H(z)=\sum^\infty_{k=0} A_k z^k/k\text{!}
\]
defines a holomorphic $H\colon\Omega\to\cA$.
Now (3.6) shows that for $|z|\leq r<1$ the series
\[
\sum^\infty_{k=0}\ {\partial^k H_\nu\over\partial z^k} (0) {z^k\over k!}\quad (=H_\nu (z))
\]
is termwise dominated in norm by $\sum Cr^k$.
By virtue of (3.7) this implies $H_\nu\to H$ locally uniformly on $\Omega$, whence $H^*H=\lim_\nu H_\nu^* H_\nu=\lim_\nu P_\nu=P$ on $\Omega$; and by  Lemma 3.6  $H$ takes invertible values, as needed.

To show $H$ is unique up to a unitary factor, consider another solution $K$ of the same problem, and
\[
Q=\begin{cases}K^*K&\text{ on }\Omega\\ F&\text{ on }\partial\Omega\end{cases}\in C_{\op}(\oOmega,\cA^+).
\]
Since $R^P=R^Q=0$, Lemma 3.2 implies $P=Q$, and so
\[
K^*\ {\partial^k K\over\partial z^k}={\partial^k Q\over  \partial z^k}={\partial^k P\over\partial z^k}=H^*\ {\partial^k H\over\partial z_k}.
\]
Substituting $z=0$ here, Taylor's formula gives that $K(z)=UH(z)$, where $U=K^*(0)^{-1} H^*(0)$ is unitary because 
\[
K^*(0)^{-1} H^*(0) H(0) K(0)^{-1}=K^*(0)^{-1} P(0) K(0)^{-1}=K^* (0)^{-1} Q(0) K(0)^{-1}=\mathbf 1.
\]
This completes the proof.
\end{proof}

\begin{proof}[ Proof of Theorem 1.1]As we already noted, $P$ constructed in Theorem 3.1 solves the Dirichlet problem by (3.1).
Uniqueness of $P$ follows from Lemma 3.2.
\end{proof}

We finish the subject of Dirichlet problems by proving a regularity result:

\begin{thm}If $k\in (0,\infty)$ is nonintegral and $F\in C^k(\partial\Omega,\cA^+)$, then $H\colon\Omega\to \cA^\times$ in Theorem 3.1 extends to a function in $C^k(\oOmega,\cA^\times)$.
\end{thm}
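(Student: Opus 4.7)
The plan is a continuity method. First normalize so that $F(1)=\mathbf{1}$, by replacing $F$ with $F(1)^{-1/2}FF(1)^{-1/2}$ and correspondingly adjusting $H$; the constant $F(1)^{-1/2}\in\cA^+$ leaves both the $C^k$ hypothesis and the desired $C^k$ conclusion unaffected. Set $\Phi_t=(1-t)\mathbf{1}+tF\in C^k(\partial\Omega,\cA^+)$ for $t\in[0,1]$ (convex combinations of positives remain in $\cA^+$), and let $T\subset[0,1]$ consist of those $t$ for which $\Phi_t=H_t^*H_t|\partial\Omega$ for some $H_t\in C^k(\oOmega,\cA^\times)$ holomorphic on $\Omega$. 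I will show $T=[0,1]$ by verifying nonempty, open, and closed.

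Nonemptiness is immediate: $0\in T$ with $H_0\equiv\mathbf{1}$. For openness at $t_0\in T$, use that $C^k(\oOmega,\cA)$ is a Banach algebra and inversion is smooth on $\cA^\times$; thus $H_{t_0}^{-1}\in C^k(\oOmega,\cA^\times)$ and $G_t:=H_{t_0}^{*-1}\Phi_tH_{t_0}^{-1}|\partial\Omega\in C^k(\partial\Omega,\cA^+)$ tends to $\mathbf{1}$ in $C^k$ as $t\to t_0$. Lemma 3.4 then yields, for $t$ close to $t_0$, a $K_t\in C^k(\oOmega,\cA^\times)$ holomorphic on $\Omega$ with $K_t^*K_t|\partial\Omega=G_t$; then $H_t:=K_tH_{t_0}\in C^k(\oOmega,\cA^\times)$ is holomorphic on $\Omega$ and provides the required factorization of $\Phi_t$.

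Closedness is the main work. Given $t_\nu\in T$ with $t_\nu\to t_0$, normalize $H_\nu:=H_{t_\nu}$ so that $H_\nu(0)\in\cA^+$; the argument in the proof of Theorem 3.1 then gives locally uniform convergence on $\Omega$ of $H_\nu$ to the Theorem 3.1 solution $\widetilde H$ for $\Phi_{t_0}$, and the task is to show $\widetilde H$ extends to $C^k(\oOmega,\cA^\times)$. Granting a uniform bound $\|H_\nu\|_{C^k(\oOmega,\cA)}+\|H_\nu^{-1}\|_{C^k(\oOmega,\cA)}\le C$ independent of $\nu$, an Arzel\`a-Ascoli argument extracts a subsequence converging in $C^{k'}(\oOmega,\cA)$ for any $k'<k$ with $\lf k'\rf=\lf k\rf$; the limit inherits the $C^k$ bound by lower semicontinuity of the H\"older seminorms, is holomorphic on $\Omega$, takes values in $\cA^\times$, and has the correct boundary trace, so by the uniqueness clause of Theorem 3.1 it agrees with $\widetilde H$ up to a unitary factor. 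Hence $t_0\in T$.

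The hard part is clearly the uniform $C^k$ a priori estimate on $H_\nu$ and $H_\nu^{-1}$. My plan is to extract it by running the openness argument quantitatively: the implicit function theorem behind Lemma 3.4 supplies a concrete perturbation radius and Lipschitz constant at $\mathbf{1}$, so via the conjugation $\Phi_s\mapsto H_t^{*-1}\Phi_sH_t^{-1}$ one obtains, for $|s-t|\le\delta(\|H_t\|_{C^k},\|F\|_{C^k})$, a factorization $H_s$ with $\|H_s\|_{C^k}\le M(\|H_t\|_{C^k},\|F\|_{C^k})$. Chaining these estimates along a finite cover of $[0,1]$, together with the uniform $\cA^+$ bounds $\var\mathbf{1}\le\Phi_t\le M\mathbf{1}$ inherited from the homotopy (and Lemma 3.2 propagating them into $\Omega$), should furnish the desired bound. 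An alternative route is to approximate $F$ in $C^k(\partial\Omega,\cA^+)$ by Laurent polynomials (whose Lemma 3.5 factorizations are trivially $C^k$ up to the boundary) and to upgrade Corollary 3.3 to convergence in $C^k(\oOmega,\cA)$; but this again reduces to essentially the same Schauder-type estimate for the nonlinear Riemann--Hilbert problem $H^*H=F$.
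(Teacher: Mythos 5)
Your proposal takes a genuinely different route — a global continuity method in $t$ for $\Phi_t=(1-t)\mathbf{1}+tF$ — from the paper's proof, which is purely local at each boundary point $\zeta\in\partial\Omega$. The paper uses the M\"obius automorphisms $\varphi_t(z)=(z+t\zeta)/(1+t\bar\zeta z)$, which as $t\to 1$ send any closed arc of $\partial\Omega\setminus\{-\zeta\}$ to a shrinking neighborhood of $\zeta$; hence $F\circ\varphi_t$ is $C^k$-close to the constant $F(\zeta)$ on an arc about $\zeta$, Lemma 3.4 produces a local factor $L\in C^k(\oOmega,\cA^\times)$ with $L^*L=F$ on an arc $J\ni\zeta$, and a reflection/analytic-continuation argument (comparing the bounded holomorphic $\langle HL^{-1}v,w\rangle$ with the antiholomorphic $\langle H^{*-1}L^*v,w\rangle$ on $J$) shows $HL^{-1}$ extends holomorphically past $\zeta$, whence $H=(HL^{-1})L$ is $C^k$ near $\zeta$. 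No a priori Schauder-type estimate along a homotopy is ever needed.

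The gap in your proposal is precisely what you flag as ``the hard part'': the uniform $C^k(\oOmega)$ bound on $H_\nu$ and $H_\nu^{-1}$ needed for closedness. Your proposed remedy --- chaining the quantitative implicit function theorem from Lemma 3.4 along a finite cover of $[0,1]$ --- is circular: the perturbation radius $\delta$ at $t$ depends on $\|H_t\|_{C^k}$, which is exactly the quantity you are trying to bound, and there is no mechanism preventing $\delta$ from shrinking faster than the chain can advance (e.g.\ $\delta(N)\sim e^{-N}$ with $\|H_t\|_{C^k}$ doubling each step would stall well before $t=1$). The uniform bounds $\varepsilon\mathbf{1}\le\Phi_t\le M\mathbf{1}$, propagated into $\Omega$ by Lemma 3.2, only give $L^\infty$ control of $H_t$ and $H_t^{-1}$; upgrading that to $C^k$ up to the boundary is exactly the nonlinear Riemann--Hilbert regularity statement at issue, which you cannot invoke. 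Your second suggested route (Laurent-polynomial approximation plus a $C^k$ upgrade of Corollary 3.3) reduces, as you yourself note, to the same unproved estimate. To close the gap you would in effect have to reproduce something like the paper's local argument; the M\"obius-plus-reflection device is what actually delivers the boundary regularity with no a priori estimate along a homotopy.
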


\begin{proof}Given $\zeta\in\partial\Omega$, we will show that $H$ extends to a $C^k$ function in a neighborhood of $\zeta$.
The automorphisms of $\oOmega$
\[
\varphi_t(z)={z+t\zeta\over 1+t\overline\zeta z}\ ,\quad t\in [0,1),
\]
as $t\to 1$ converge in the $C^\infty$ topology to the constant map $\equiv\zeta$ on any closed arc $I\subset\partial\Omega\backslash \{-\zeta\}$.
Hence $F\circ\varphi_t\to F_0\equiv F(\zeta)$ in $C^k(I,\cA)$, and so by Lemma 3.4 for 
some $t$ there is a $K\in C^k(\oOmega,\cA^\times)$, holomorphic on $\Omega$, such that $K^*K=F\circ\varphi_t$ in a neighborhood of $\zeta\in\partial\Omega$.
Therefore on some closed arc $J\subset\partial\Omega$ containing $\zeta$ 
\[
L^*L=F,\qquad\text{where }L=K\circ\varphi_t^{-1}\in C^k(\oOmega,\cA^\times).
\]
In particular, $(L^*L)(rz)-(H^*H)(rz)\to 0$ as $r\nearrow 1$, uniformly for $z\in J$, or
\[
(H^{*-1} L^* LH^{-1})(rz)\to\mathbf 1\qquad\text{ as }r\nearrow 1,
\]
uniformly for $z\in J$ (since $H,H^{-1}$ are bounded).
Fix $v,w\in V$.
The function $\langle HL^{-1}v,w\rangle$ is bounded and holomorphic on $\Omega$; its almost everywhere existing boundary values on $J$ satisfy
\begin{multline*}
\lim_{r\nearrow 1} \langle (HL^{-1}(rz)v,w\rangle=\\
\lim_{r\nearrow 1} \ \langle(\mathbf 1 -H^{*-1} L^* LH^{-1})(HL^{-1})(rz)v,w\rangle+
\langle (H^{*-1} L^*)(rz)v,w\rangle=\\
\lim_{r\nearrow 1}\langle (H^{*-1} L^*)(rz)v,w\rangle.
\end{multline*}
Thus the bounded holomorphic and antiholomorphic functions $\langle HL^{-1}v,w\rangle$ and\newline
$\langle H^{*-1} L^*v,w\rangle$ share the same boundary values on $J$.
This implies that the former continues analytically to $\bC\backslash\oOmega$ across int $J$. But this, in turn, implies that $HL^{-1}$ continues analytically across $\zeta$, as follows e.g. from the more general [L4, Lemma 6.1].
Hence $H$ itself extends $C^k$ near $\zeta$.
\end{proof}

\section{Mean values}

In this section we will characterize semipositivity/negativity of curvature through mean value properties.
This can be done in generality greater than smooth or even continuous metrics that we have worked with so far.
What the appropriate generality should be is suggested by the case of line bundles.
On a trivial line bundle hermitian metrics of, say, seminegative curvature are determined by plurisubharmonic functions $u$, and it is well understood that it is profitable to allow $u$ to be upper semicontinuous and take $-\infty$ as value.
This latter translates to allowing the metric to assign zero length to nonzero vectors in the fibers.

Accordingly, for a Hilbert space $(V,\langle ,\rangle)$ let
\[
\End^{\geq 0} V=\{A\in\End\, V\colon A=A^*\geq 0\}.
\]
Given an open $\Omega\subset\bC$ and a trivial Hilbert bundle $E\colon\Omega\times V\to\Omega$, by a finite hermitian metric on $E$ we mean a function $h\colon\Omega\times V\times V\to\bC$ that can be written
\begin{equation}
h(z,v,w)=\langle P(z)v,w\rangle,\text{ with }P\colon\Omega\to\End^{\geq 0} V.
\end{equation}
Thus here we will deal with one dimensional bases only.
Mean value properties when $\dim\Omega>1$ follow upon restricting to one dimensional slices.

\begin{defn}We say that a finite hermitian metric $h$, or the corresponding $P$ in (4.1), has seminegative curvature if $\langle P\varphi,\varphi\rangle$ is subharmonic for any holomorphic $\varphi\colon\Omega\to V$.
\end{defn}

This definition has been in use for a while now in various degrees of generality.
It implies, in particular, that $P$ is weakly u.s.c., i.e.~$\langle Pv,v\rangle$ is u.s.c.~for $v\in V$.
It also implies the seemingly stronger condition that $\log\langle P\varphi,\varphi\rangle\colon\Omega\to [-\infty,\infty)$ is subharmonic (because with any holomorphic $f\colon\Omega\to \bC$ the function $\langle Pe^f\varphi,e^f\varphi\rangle$ satisfies the maximum principle, whence $2\text{ Re }f+\log\langle P\varphi,\varphi\rangle$ also satisfies it).
At this point we are allowing subharmonic functions to be $\equiv -\infty$.---When $P\in C^2_{\weak}(\Omega,\End^+ V)$, and its curvature $R^P$ can be computed by (1.2), our current notion of seminegative curvature is equivalent to $P_{z\overline z}\geq P_{\overline z} P^{-1} P_z$.

In the following, integrals of $P$ will be understood in the weak sense:\ \ $\int P$ (over whatever set) is the operator $Q$ that satisfies $\int\langle Pv,w\rangle=\langle Qv,w\rangle$ for all $v,w\in V$.
It suffices to require the latter equality for $v=w$ only, from which the general case follows by polarization.

We write $D_r(a)$ for the disc $\{z\in\bC\colon |z-a| < r\}$.

\begin{thm}For a weakly u.s.c.~$P\colon\Omega\to\End^{\geq 0} V$ the following are equivalent:

(i)\ $P$ has seminegative curvature.

(ii)\ For any disc $\overline{D_r(z_0)}\subset\Omega$, writing $\oint$ for the average $(1/2\pi r)\int_{\partial D_r (z_0)}$, 
\begin{equation}
\begin{pmatrix}
\oint P(z) |dz| & \oint P(z) dz\\
\oint P(z) d\overline z & \oint P(z) |dz|
\end{pmatrix} \geq \begin{pmatrix} P(z_0)&0\\ 0&0\end{pmatrix}
\end{equation}
in $\End\,(V\oplus V)$.

(iii)\ For any disc $\overline{D_r(z_0)}\subset\Omega$ and $t\in (0,\infty)$
\begin{equation}
\oint P(z) |dz|-\oint P(z) dz \left(t\Id_V +\oint P(z) |dz|\right)^{-1}\oint P(z) d\overline z\geq P(z_0).
\end{equation}
\end{thm}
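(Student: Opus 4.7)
The plan is to prove $(\mathrm{ii})\Leftrightarrow(\mathrm{iii})$ by Schur complement, $(\mathrm{i})\Rightarrow(\mathrm{ii})$ by affine holomorphic test functions, and finally the substantive direction $(\mathrm{ii})\Rightarrow(\mathrm{i})$ by an affine-approximation estimate together with a Blaschke--Privalov-type localization. Throughout I abbreviate $A=\oint P|dz|$ and $B=\oint P\,dz$, so $\oint P\,d\overline z=B^*$.

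For $(\mathrm{ii})\Leftrightarrow(\mathrm{iii})$ I use a Schur-complement computation: since $A+t\Id$ is invertible for $t>0$, the block operator $\begin{pmatrix} A-P(z_0) & B \\ B^* & A+t\Id \end{pmatrix}$ on $V\oplus V$ is positive iff its Schur complement $A-P(z_0)-B(t\Id+A)^{-1}B^*$ is, which is exactly (iii). The block matrix at $t=0$ is the one in (ii); the equivalence comes from adding the positive diagonal perturbation $\bigl(\begin{smallmatrix}0&0\\0&t\Id\end{smallmatrix}\bigr)$ (giving $(\mathrm{ii})\Rightarrow(\mathrm{iii})$ for every $t>0$), and from letting $t\to 0^+$ in the norm-closed positive cone (giving the converse). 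For $(\mathrm{i})\Rightarrow(\mathrm{ii})$, I feed the sub-mean-value inequality for $\langle P\varphi,\varphi\rangle$ the affine test function $\varphi(z)=v+(z-z_0)u$: expanding on the circle $z-z_0=re^{i\theta}$ and substituting $w=-iru$ (so that $\|w\|^2=r^2\|u\|^2$ and the cross term takes the form $2\,\mathrm{Re}\langle Bw,v\rangle$) turns the sub-MVP, as $v,w$ range over $V$, exactly into the block positivity in (ii). The same computation run in reverse shows that (ii) is equivalent to the sub-MVP of $\langle P\varphi_{\mathrm{aff}},\varphi_{\mathrm{aff}}\rangle$ holding on every disc for every affine holomorphic $\varphi_{\mathrm{aff}}$.

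For $(\mathrm{ii})\Rightarrow(\mathrm{i})$ fix a holomorphic $\varphi\colon\Omega\to V$ and set $f(z)=\langle P(z)\varphi(z),\varphi(z)\rangle$. By Proposition 2.4, $f$ is upper semicontinuous, and Banach--Steinhaus applied to the weakly u.s.c.~family $P(z)$ makes $\|P\|$ locally bounded in operator norm. A standard Blaschke--Privalov-type maximum principle, comparing $f$ with the Poisson extension of $f|_{\partial D}$ on an arbitrary $\overline{D_R(a)}\subset\Omega$, reduces subharmonicity of $f$ to the local estimate $f(a)\le\liminf_{r\to 0^+}\oint_{|z-a|=r}f|dz|$ at every $a\in\Omega$. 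To establish this, split $\varphi=\varphi_{\mathrm{aff}}+\varphi_2$ near $a$ with $\varphi_{\mathrm{aff}}(z)=\varphi(a)+(z-a)\varphi'(a)$ and $\varphi_2(z)=(z-a)^2\rho(z)$, $\rho$ holomorphic. Cauchy--Schwarz for the semidefinite form $\langle P\cdot,\cdot\rangle$, used pointwise and then under the integral, gives
\[
\oint\langle P\varphi,\varphi\rangle|dz|\ge \oint\langle P\varphi_{\mathrm{aff}},\varphi_{\mathrm{aff}}\rangle|dz|-2\Bigl(\oint\langle P\varphi_{\mathrm{aff}},\varphi_{\mathrm{aff}}\rangle|dz|\Bigr)^{1/2}\Bigl(\oint\langle P\varphi_2,\varphi_2\rangle|dz|\Bigr)^{1/2}.
\]
The first term on the right is $\ge f(a)$ by the affine sub-MVP equivalent form of (ii) derived above, while the cross term is $O(r^2)$ because $\|\varphi_2\|=O(r^2)$ on $|z-a|=r$ and $\|P\|$ is locally bounded. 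Hence $\oint f|dz|\ge f(a)-Cr^2$, and the local sub-MVP follows on letting $r\to 0^+$.

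The step I expect to be the main obstacle is this last implication: (ii) only encodes sub-MVP for affine test functions, and bridging to arbitrary holomorphic $\varphi$ requires both the Blaschke--Privalov criterion (to promote a $\liminf$ inequality into genuine sub-MVP for merely u.s.c.~$f$) and a uniform Cauchy--Schwarz control of the nonaffine remainder, the latter resting on the local norm-boundedness of $P$ that is itself a nontrivial consequence of weak u.s.c.~via Banach--Steinhaus.
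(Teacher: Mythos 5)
Your proof of the equivalence $(\mathrm{ii})\Leftrightarrow(\mathrm{iii})$ via Schur complements and of $(\mathrm{i})\Rightarrow(\mathrm{ii})$ via affine test vectors matches the paper's argument. The substantive direction $(\mathrm{ii})\Rightarrow(\mathrm{i})$ is where you diverge from the paper, and that is where your argument has a genuine gap.

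You reduce subharmonicity of $f=\langle P\varphi,\varphi\rangle$ to the condition ``$f$ u.s.c.\ and $f(a)\le\liminf_{r\to 0^+}\oint_{|z-a|=r} f\,|dz|$ at every $a$.'' This is \emph{not} a valid criterion for subharmonicity. The function $f(z)=-|z|$ on $\bC$ is continuous, satisfies $\oint_{|z-a|=r}f\,|dz|=f(a)-\tfrac{r^2}{4|a|}+O(r^3)$ (and $=f(0)-r$ at $a=0$), so $\liminf_{r\to 0}\oint f = f(a)$ at every $a$ and the criterion is met, yet $f$ is not subharmonic. The true Blaschke--Privalov criterion requires $\liminf_{r\to 0}\,r^{-2}\bigl(\oint_r f - f(a)\bigr)\ge 0$, i.e.\ the deficit must be $o(r^2)$. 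Your Cauchy--Schwarz estimate, however, only delivers $\oint f\ge f(a)-Cr^2$ with a constant $C>0$ not going to zero (the cross term $(\oint\langle P\varphi_{\mathrm{aff}},\varphi_{\mathrm{aff}}\rangle)^{1/2}(\oint\langle P\varphi_2,\varphi_2\rangle)^{1/2}$ is genuinely of order $r^2$ since $\|\varphi_2\|\sim r^2\|\rho(a)\|$). An $O(r^2)$ deficit is the same order as the Laplacian of a smooth function, so it carries no sign information and cannot force $\Delta f\ge 0$; Cauchy--Schwarz is too lossy here, throwing away the crucial cancellation coming from $\oint\overline{(z-a)^2}\,|dz|=0$.

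The paper sidesteps this entirely. It first treats $P\in C^2_{\op}(\Omega,\End^+V)$ by Taylor expansion, where the deficit in (4.4) is \emph{exactly} $r^2(P_{z\bar z}-P_{\bar z}P^{-1}P_z)+o(r^2)$, so $r\to 0$ cleanly extracts the curvature inequality. For general weakly u.s.c.\ $P$, it mollifies: $Q=\chi\ast P$ inherits (4.2), and $t\Id+Q$ is $C^\infty_{\op}$ and invertible, hence seminegatively curved by the smooth case. Crucially, (4.8) gives $\chi\ast P\ge P$, so with $t_n\searrow 0$ and shrinking mollifiers the functions $\langle Q_n\varphi,\varphi\rangle$ are subharmonic and \emph{decrease} to $\langle P\varphi,\varphi\rangle$, making the limit subharmonic. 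The decreasing-limit mechanism is what replaces any pointwise $\liminf$ criterion, and is essential because $P$ is only u.s.c. If you want to repair your approach, you would need to (a) invoke the correct Blaschke--Privalov statement and (b) estimate the cross term without Cauchy--Schwarz, exploiting the oscillation of $(z-a)^2$ --- and even then, the $o(r^2)$ bound seems to require continuity of $P$ at $a$, which you do not have.
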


If $\oint P|dz|$ is invertible, then (4.3) is equivalent to the simpler estimate
\begin{equation}
\oint P(z) |dz|-\oint P(z) dz \left(\oint P(z)|dz|\right)^{-1}\oint P(z) d\overline z\geq P(z_0).
\end{equation}
Even for noninvertible $\oint P|dz|$ (4.3) and (4.4) will be equivalent if we define the product in (4.4) as the monotone limit, in the weak or strong operator topology,
\[
\lim_{t\searrow 0} \oint P dz \left(t\Id_V+\oint P |dz|\right)^{-1} \oint P d\overline z.
\]
The equivalence of (4.2), (4.3) is an instance of the following.

\begin{lem}Let $V,W$ be Hilbert spaces, $A=A^*\in\End \,V$, $B\in\Hom (W,V)$, $C=C^*\in\End^{\geq 0} W$.
Then
\begin{equation}
\begin{pmatrix}A&B\\ B^*&C\end{pmatrix}\geq 0
\end{equation}
in $\End\,(V\oplus W)$ if and only if $A\geq B(t\Id_W+C)^{-1} B^*$ for all $t\in (0,\infty)$, which in turn is equivalent to
\[
D=\lim_{t\searrow 0} B(t\Id_W+C)^{-1} B^*
\]
existing in the strong operator topology and satisfying $A\geq D$.
\end{lem}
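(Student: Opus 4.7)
The statement is a Schur complement criterion, so the plan is to massage the quadratic form associated to the block operator until a Schur complement appears, first under a positive perturbation $C\mapsto C+t\Id_W$ (which makes the lower right corner invertible), and then to let $t\searrow 0$.

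For the forward direction, I would first note that if $\begin{pmatrix}A&B\\B^*&C\end{pmatrix}\ge 0$, then adding $t\begin{pmatrix}0&0\\0&\Id_W\end{pmatrix}$ preserves positivity, so for each $t>0$,
\[
\langle Av,v\rangle + 2\Re\langle Bw,v\rangle + \langle(C+t\Id_W)w,w\rangle\ge 0\qquad(v\in V,\ w\in W).
\]
Since $C+t\Id_W\in\End^+W$, I can substitute $w=-(C+t\Id_W)^{-1}B^{*}v$; the cross and quadratic $w$-terms collapse and one obtains $\langle Av,v\rangle\ge\langle B(t\Id_W+C)^{-1}B^{*}v,v\rangle$, which is the desired inequality.

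For the reverse direction, assume $A\ge B(t\Id_W+C)^{-1}B^{*}$ for all $t>0$. Set $E_t=(C+t\Id_W)^{1/2}$, so that $E_t^{-1}=(C+t\Id_W)^{-1/2}$ is bounded and self adjoint. Expanding
\[
\bigl\|E_t^{-1}B^{*}v+E_tw\bigr\|^2 \;=\;\langle B(C+t\Id_W)^{-1}B^{*}v,v\rangle + 2\Re\langle Bw,v\rangle + \langle Cw,w\rangle + t\|w\|^2 \;\ge\;0,
\]
and using the hypothesis to replace the first summand by the smaller quantity $\langle Av,v\rangle$, I get
\[
\langle Av,v\rangle+2\Re\langle Bw,v\rangle+\langle Cw,w\rangle \;\ge\; -t\|w\|^2.
\]
Letting $t\searrow 0$ yields block positivity.

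For the monotone limit, the key observation is that $t\mapsto B(t\Id_W+C)^{-1}B^{*}$ is decreasing in $t$: if $s>t$, then $C+s\Id_W\ge C+t\Id_W$, so $(C+s\Id_W)^{-1}\le(C+t\Id_W)^{-1}$, and sandwiching with $B,B^{*}$ preserves the order. Consequently, as $t\searrow 0$ the family is increasing in the usual sense, and the strong operator limit exists precisely when this increasing net is bounded above; when it exists it equals the supremum, so $A$ dominates every $B(t\Id_W+C)^{-1}B^{*}$ iff $A$ dominates the limit $D$. The standard fact needed is that an increasing net of self adjoint operators bounded above converges in the strong operator topology to its supremum; this is the only ``analytic'' input and is entirely routine.

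The main obstacle, if any, is purely bookkeeping: making sure that $B^{*}v$ actually lies in the domain of $(C+t\Id_W)^{-1}$, which is automatic since $t>0$ makes $C+t\Id_W$ boundedly invertible, and that the completion-of-square identity above is written correctly (i.e. that $E_t$ and $E_t^{-1}$ commute with $C$ so that the cross term $\langle B^{*}v,w\rangle$ reappears cleanly). Once the $t>0$ step is in place, passing to the limit is immediate by monotonicity.
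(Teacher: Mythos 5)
Your proof is correct and follows essentially the same route as the paper: perturb $C$ to $C_t=C+t\Id_W$ to make the lower-right block invertible, extract the Schur complement, and let $t\searrow 0$ using monotone operator convergence (Vigier's theorem). The paper phrases the middle step as a block $*$-congruence
$\begin{pmatrix}\Id&-BC_t^{-1}\\0&\Id\end{pmatrix}\begin{pmatrix}A&B\\B^*&C_t\end{pmatrix}\begin{pmatrix}\Id&0\\-C_t^{-1}B^*&\Id\end{pmatrix}=\begin{pmatrix}A-BC_t^{-1}B^*&0\\0&C_t\end{pmatrix}$,
which is exactly your completion of the square written in operator form, so the two presentations are equivalent. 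One wording slip in your reverse direction: the hypothesis $A\ge B(t\Id_W+C)^{-1}B^*$ makes $\langle Av,v\rangle$ the \emph{larger} quantity, not the smaller one; replacing the first summand by this larger quantity is what keeps the expression nonnegative, and the ensuing inequality is as you state, so the argument is unaffected.
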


The result is well known and much used in matrix theory, where $A-D$ is called the Schur complement of $C$, see [H, Chapter 7].
The proof for operators is the same as for matrices:\ \ If in (4.5) we replace $C$ by $C_t=C+t\Id_W$, the resulting inequality for all $t>0$ will be equivalent to the original (4.5).
But
\begin{equation}
\begin{pmatrix}\Id_V&-BC_t^{-1}\\ 0&\Id_W\end{pmatrix}
\begin{pmatrix}A&B\\ B^*&C_t\end{pmatrix}
\begin{pmatrix}\Id_V&0\\ -C_t^{-1} B^*&\Id_W\end{pmatrix}=
\begin{pmatrix}A-B C_t^{-1} B^*&0\\ 0&C_t\end{pmatrix},
\end{equation}
so that (4.5) is equivalent to
\begin{equation}
A-B C_t^{-1} B^*\geq 0\quad\text{ for all }t>0.
\end{equation}
Next suppose that (4.7) holds.
Then $BC_t^{-1}B^*$ is a decreasing function of $t>0$, bounded above by $A$, hence by Vigier's theorem [RSz, p.~261] it converges in the strong operator topology to a $D\leq A$, as $t\searrow 0$.
Conversely, suppose $D=\lim_{t\searrow 0} BC_t^{-1} B^*$ exists and satisfies $D\leq A$.
As the limit is monotone, $BC_t^{-1} B^*\leq A$ for $t>0$.

\begin{proof}[Proof of Theorem 4.2]By Lemma 4.3 (ii) and (iii) are equivalent.
To show (i) $\Rightarrow$ (ii), take $z_0=0$ for simplicity.
Given $u,v\in V$, the function $\varphi(z)=u+iv z/r$ is holomorphic, so that $\langle P\varphi,\varphi\rangle$ is subharmonic.
Noting that on the circle $|z|=r$ we have $dz=iz |dz|/r$, $d\overline z=-i\overline z|dz|/r$, the mean value theorem gives
\[
\langle P(0)u,u\rangle\leq \oint\langle P\varphi,\varphi\rangle|dz|=\oint \langle Pu,u\rangle |dz|+\oint\langle Pu,v\rangle d\overline z+\oint (Pv,u\rangle dz+\oint\langle Pv,v\rangle |dz|.
\]
But this is equivalent to (4.2).

To prove that (ii) or (iii) imply (i), we start by assuming $P\in C^2_{\op} (\Omega,\End^+ V)$.
Then we need to show $P_{z\overline z}\geq P_{\overline z} P^{-1} P_z$.
Let $z_0\in\Omega$ and $z=z_0+\zeta$.
As $\zeta\to 0$ 
\[
P(z)=P(z_0)+P_z(z_0)\zeta+P_{\overline z}(z_0)\overline\zeta+P_{z\overline z}(z_0) |\zeta|^2+\text{ Re }P_{zz}(z_0)\zeta^2+o |\zeta|^2,
\]
whence, as $r\to 0$
\begin{multline*}
\oint P(z) |dz|-\oint P(z) dz \left(\oint P(z) |dz|\right)^{-1} \oint P(z) d\overline z-P(z_0)=\\
 r^2\big(P_{z\overline z} (z_0)-P_{\overline z} (z_0) P(z_0)^{-1} P_z (z_0) + o(1)\big).
\end{multline*}
Therefore (4.4) implies $P_{z\overline z}\geq P_{\overline z} P^{-1} P_z$.

Now take a general $P$. (4.2) implies 
\begin{equation}
\oint P(z)|dz|\geq P(z_0).
\end{equation}
With a smooth $\chi\colon\bC\to [0,\infty)$ supported in a disc $D_\rho(0)$, the convolution $Q=\chi*P$ is defined and $C^\infty_{\weak}=C^\infty_{\op}$ in $\Omega_\rho=\{z\in\Omega\colon\dist (z,\partial\Omega)>\rho\}$, cf.~Proposition 2.3.
If $\chi$ is rotationally symmetric and $\int_{\bC}\chi=1$, then $Q\geq P$ on $\Omega_\rho$ by (4.8).
Clearly, $Q$ inherits the mean value property (4.2) from $P$, as does $t\Id_V+Q$ for any $t>0$.
By what we have just proved, $t\Id_V+Q$ is seminegatively curved.
Further, given a compact $K\subset\Omega_\rho$, $\sup_K \|Q\|$ is dominated by $\sup\| P\|<\infty$, the latter $\sup$ taken over the $\rho$--neighborhood of $K$.
Choose a sequence  of $\chi=\chi_n$ as above, whose supports shrink to $0$, and also $t_n\searrow 0$.
Then  $Q_n=t_n\Id_V+\chi_n*P\to P$ pointwise in the weak operator topology.
Therefore, with any holomorphic $\varphi\colon\Omega\to V$
\[
\langle P\varphi,\varphi\rangle=\lim \langle Q_n\varphi,\varphi\rangle=\inf \langle Q_n\varphi,\varphi\rangle
\]
is subharmonic.
\end{proof}

For a subharmonic function $u$ the integrals $\int_0^{2\pi} u(z_0+r e^{it})dt$ increase with $r$.
This property also generalizes to seminegatively curved metrics and the modified averages in (4.3), (4.4), that we will denote 
\begin{equation}
\frak S (P,z_0,r)=\oint P|dz|-\oint Pdz \left(\oint P|dz|\right)^{-1} \oint P d\overline z.
\end{equation}
The expression makes sense for a general weakly measurable, bounded $P\colon\partial D_r (z_0)\to\End^{\geq 0} V$, assuming $\oint P |dz|$ is invertible.
Failing that, we define
\[
\frak S (P,z_0,r)=\lim_{t\searrow 0}\frak S (P+t\Id_V, z_0,r)
\]
as long as the limit exists.
As we have seen, the limit will exist when $P$ has seminegative curvature.
Thus $\frak S(P,z_0,r)$ is the Schur complement in
\begin{equation}
\frak M (P,z_0,r)=\begin{pmatrix} \oint P|dz|& r\oint Pdz\\  r\oint Pd\overline z& r^2\oint P|dz|\end{pmatrix}.
\end{equation}

\begin{thm}If $P\colon\Omega\to\End^{\geq 0} V$ is seminegatively curved, and $\overline{D_r(z_0)}\subset \Omega$, then
\begin{equation}
\frak S(P,z_0,\rho)\leq\frak S (P,z_0,r)\qquad\text{for }0<\rho\leq r.
\end{equation}
\end{thm}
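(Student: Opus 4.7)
The plan is to combine a variational formula for the Schur complement with the subharmonicity built into seminegative curvature. Concretely, I would first show that for every $u\in V$,
$$\langle \frak S(P,z_0,r)u,u\rangle = \inf_{w\in V} \oint_{|z-z_0|=r}\langle P\varphi^r_w, \varphi^r_w\rangle|dz|,$$
where $\varphi^r_w(z) = u + i(z-z_0)w/r$; these are the same affine test vectors already used in the proof of (i)$\Rightarrow$(ii) of Theorem 4.2. Once this identity is in hand, the theorem essentially writes itself: each $\langle P\varphi^r_w,\varphi^r_w\rangle$ is subharmonic on $\Omega$ by Definition 4.1, so $\oint_\rho\leq\oint_r$, and the family $\{\varphi^r_w\}_{w\in V}$ is independent of the radius --- the substitution $w' = \rho w/r$ identifies it with $\{\varphi^\rho_{w'}\}_{w'\in V}$. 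Hence the right-hand side is simultaneously $\geq \inf_w \oint_\rho\langle P\varphi^r_w, \varphi^r_w\rangle|dz|$ and equal to $\langle\frak S(P,z_0,\rho)u,u\rangle$ by the same variational formula at radius $\rho$, yielding (4.11).

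To establish the variational formula, I would invoke the abstract fact that the Schur complement of an invertible $(2,2)$-block $C$ satisfies $\langle (A - BC^{-1}B^*)u, u\rangle = \inf_{w} \langle \frak M(u,w)^T, (u,w)^T\rangle$, and then verify by direct computation, using $(z-z_0)|dz| = -ir\,dz$ and $\overline{(z-z_0)}|dz| = ir\,d\overline z$ on the circle of radius $r$ about $z_0$, that the quadratic form $\oint\langle P\varphi^r_w,\varphi^r_w\rangle|dz|$ in $(u,w)$ is exactly $\langle \frak M(P,z_0,r)(u, w/r)^T, (u, w/r)^T\rangle$. The infimum over $w\in V$ then coincides with the infimum over $w/r\in V$, which is $\langle \frak S(P,z_0,r)u, u\rangle$.

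The argument above tacitly assumes $\oint P|dz|$ is invertible on both circles. For the general case I would regularize by replacing $P$ with $P + t\Id_V$, $t>0$: this still has seminegative curvature, since $\|\varphi\|^2$ is subharmonic for holomorphic $\varphi$, and now $\oint(P+t\Id_V)|dz| = t\Id_V + \oint P|dz|$ is automatically invertible, so the previous step yields $\frak S(P+t\Id_V,z_0,r)\geq\frak S(P+t\Id_V,z_0,\rho)$. Sending $t\searrow 0$ preserves the inequality because, by the definition of $\frak S$ following (4.9) together with Vigier's theorem as invoked in the proof of Lemma 4.3, the regularized Schur complements decrease monotonically in the strong operator topology to $\frak S(P,z_0,r)$ and $\frak S(P,z_0,\rho)$, respectively.

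I expect the main bookkeeping obstacle to be the verification of the scalar factors in the variational identity, where one must carefully track the phases $i$ and the factors $r$ coming from the conversions between $dz$, $d\overline z$, and $|dz|$. Conceptually, however, the whole proof rests on a single observation: the family of affine holomorphic test functions used to detect $\frak S$ is independent of the radius, so that subharmonicity translates directly into monotonicity of the Schur complements.
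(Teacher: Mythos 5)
Your proof is correct, and it is essentially the route the paper takes: both use the affine holomorphic test sections $u+i(z-z_0)v$ (the paper writes $v$ where you write $w/r$) and the fact that $\langle P\varphi,\varphi\rangle$ is subharmonic, and both regularize by $P+t\Id$ to handle a possibly noninvertible $\oint P|dz|$. The only difference is organizational: the paper first packages the monotone circle averages into the matrix inequality $\frak M(P,z_0,\rho)\le\frak M(P,z_0,r)$ and then invokes Lemma 4.5(ii) (monotonicity of the Schur complement in the block matrix), whereas you inline the proof of that lemma by using the variational characterization $\langle\frak S u,u\rangle=\inf_w\langle\frak M(u,w),(u,w)\rangle$ and observing that the family of test sections is radius-independent under $w\mapsto\rho w/r$. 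The two are the same argument expressed in different bookkeeping; your computation of the quadratic form, including the conversions $(z-z_0)|dz|=-ir\,dz$ and $\overline{(z-z_0)}|dz|=ir\,d\overline z$, checks out.
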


This generalizes (4.3), which at least for continuous $P$ is obtained in the limit $\rho\to 0$.
The proof requires some preparation.

\begin{lem}Let $V,W$ be Hilbert spaces, $A=A^*$, $A'={A'}^*\in\End\, V$, $B,B'\in\Hom (W,V)$, and $C,C'\in\End^+ W$.
\item{(i)} $(B+B')(C+C')^{-1}(B+B')^*\leq BC^{-1} B^* +B' C'^{-1} {B'}^*$;
\item{(ii)} if
\[
\begin{pmatrix} A&B\\ B^*&C\end{pmatrix}\leq \begin{pmatrix} A'&B'\\ {B'}^*&C'\end{pmatrix}
\]
then $A-BC^{-1} B^*\leq A'-B' C'^{-1}{B'}^*$.
\end{lem}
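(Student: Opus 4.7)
For part (i), the plan is to exploit the elementary identity
\[
\begin{pmatrix} BC^{-1}B^* & B \\ B^* & C \end{pmatrix}
= \begin{pmatrix} BC^{-1} \\ \Id_W \end{pmatrix} C \begin{pmatrix} C^{-1}B^* & \Id_W \end{pmatrix},
\]
which shows the block matrix on the left is positive semidefinite, and likewise for the primed version. Adding the two inequalities yields
\[
\begin{pmatrix} BC^{-1}B^* + B'C'^{-1}B'^* & B+B' \\ (B+B')^* & C+C' \end{pmatrix} \geq 0,
\]
and since $C+C' \in \End^+ W$ is invertible, Lemma 4.3 translates this into precisely the claim (i).

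For part (ii), the idea is to feed (i) with the ``differences'' $\tilde B = B'-B$ and a slightly perturbed $\tilde C_t = C'-C + t\,\Id_W$, which is invertible for $t>0$. Applying (i) with $B,\tilde B$ in place of $B,B'$ and $C,\tilde C_t$ in place of $C,C'$ gives
\[
B'(C' + t\,\Id_W)^{-1}B'^* \;\leq\; BC^{-1}B^* + (B'-B)(C'-C+t\,\Id_W)^{-1}(B'-B)^*.
\]
Meanwhile, the hypothesis of (ii) says $\begin{pmatrix} A'-A & B'-B \\ (B'-B)^* & C'-C \end{pmatrix}\geq 0$, so Lemma 4.3 applied directly yields
\[
(B'-B)(C'-C+t\,\Id_W)^{-1}(B'-B)^* \;\leq\; A'-A \qquad \text{for every } t>0.
\]
Combining the last two displays gives $B'(C'+t\,\Id_W)^{-1}B'^* \leq BC^{-1}B^* + A' - A$, and letting $t\searrow 0$ (using that $B'(C'+t\,\Id_W)^{-1}B'^*$ converges in norm to $B'{C'}^{-1}B'^*$ since $C'$ is already invertible) produces the desired inequality $A - BC^{-1}B^* \leq A' - B'{C'}^{-1}{B'}^*$.

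The main obstacle I anticipate is conceptual rather than computational: recognizing that (ii) really is a monotonicity-cum-convexity statement for the Schur complement, and that part (i) furnishes exactly the convexity tool one needs to compare a ``sum'' factorization $B' = B + (B'-B)$, $C' = C + (C'-C)$ against the individual Schur complements. The perturbation by $t\,\Id_W$ is a minor but essential technical point to handle the fact that $C'-C$ need only be positive semidefinite, not invertible.
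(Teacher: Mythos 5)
Your proposal is correct and follows essentially the same route as the paper: for (i), show the two block matrices $\begin{pmatrix} BC^{-1}B^* & B \\ B^* & C\end{pmatrix}$ and its primed version are $\geq 0$ (you via an explicit factorization, the paper by invoking Lemma 4.3), add them, and apply Lemma 4.3 to the sum; for (ii), apply (i) to the decomposition $B' = B + (B'-B)$, $C' = C + (C'-C)$, use Lemma 4.3 on the nonnegative difference matrix to bound the Schur-complement term by $A'-A$, and handle the possible noninvertibility of $C'-C$ by a $t\,\Id_W$ perturbation and a limit $t\searrow 0$. The paper arranges the perturbation slightly differently (treating $\gamma=C'-C$ positive invertible first, then perturbing), but the idea and the key ingredients are identical.
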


The result extends to noninvertible $C,C'\ge 0$ if e.g.~$BC^{-1}B^*$ is defined, as above, by $\ds\lim_{t\searrow 0} B(C+t\Id_V)^{-1}B^*$, whenever this limit exists in the strong operator topology.---The first inequality says that the function $(B,C)\mapsto BC^{-1}B^*$ is subadditive (equivalently, convex).

\begin{proof}For finite dimensional $V,W$ statements equivalent to (i) and (ii) are formulated in [HJ, 7.7.P41]. In our generality,
(i)\ was proved in [LR]. It is also straightforward from Lemma 4.3, for
\[
\begin{pmatrix} BC^{-1}B^*+B'{C'}^{-1}{B'}^*&B+B'\\(B+B')^*&C+C'\end{pmatrix}=
\begin{pmatrix}BC^{-1}B^*&B\\B^*&C\end{pmatrix}+\begin{pmatrix}B'{C'}^{-1}{B'}^*&B'\\{B'}^*&C'\end{pmatrix}\ge 0
\]
by Lemma 4.3; and another application of Lemma 4.3 then gives (i). In turn (ii) follows if we introduce 
\[
\begin{pmatrix}\alpha&\beta\\ \beta^*&\gamma\end{pmatrix}=\begin{pmatrix} A'&B'\\ {B'}^*&C'\end{pmatrix}-\begin{pmatrix}A&B\\ B^*&C\end{pmatrix}\geq 0.
\]
We can assume that $\gamma\in\End^+V$, for the general case will then follow by replacing $C'$ by $C'+t\Id$ and letting $t\searrow 0$. By (i)
\[
A'-B' C'^{-1} {B'}^*\geq A+\alpha-BC^{-1} B^*-\beta\gamma^{-1}\beta^*\geq A-BC^{-1} B^*.
\]
\end{proof}

\begin{proof}[Proof of Theorem 4.4]Given $u,v\in V$, consider the holomorphic function $\varphi(z)=u+iv(z-z_0)$, $z\in\Omega$.
Then 
\[
\frac1\rho\int_{\partial D(z_0,\rho)}\langle P\varphi(z),\varphi(z)\rangle |dz|\leq\frac1r\int_{\partial D(z_0,r)}\langle P\varphi(z),\varphi(z)\rangle |dz|
\]
by subharmonicity.
Writing this out in terms of $u,v$ as in the proof of Theorem 4.2, we find that
\[
\frak M (P,z_0,\rho)\leq\frak M(P,z_0,r),
\]
notation as in (4.10).
Hence  Lemma 4.5(ii) and the remark following it imply (4.11).
\end{proof}

An analog of Hadamard's Three Circles Theorem, namely that $\frak S(P,z_0,e^t)$ is convex in $t$, can be proved along the same lines.

Next we turn to metrics $h$ and corresponding $P\colon\Omega\to\End\, V$ with semipositive curvature.
The proper generality for semipositive curvature is not even finite hermitian metrics, but duals of such.
However, not to overburden the discussion, we will only deal with $C^2_{\op}$  operators $P$ that take invertible values.
Then semipositive curvature means
\begin{equation}
P_{\overline z} P^{-1} P_z-P_{z\overline z}\geq 0.
\end{equation}
Our integral characterization of (4.12) will be in terms of 
\begin{equation}
\frak T (P,z_0,r)=\min_H H^* (z_0)^{-1}\frak S (H^* PH, z_0,r) H(z_0)^{-1}
\end{equation}
(when $\overline{D_r(z_0)}\subset\Omega)$, where the minimum is taken over $H\in C_{\op} (\overline{D_r(z_0)}, \GL(V))$ that are holomorphic on $D_r(z_0)$. That there is a minimum is the content of Lemma 4.6 below.
The quantity $\frak T (P,z_0,r)$ is a gauge covariant version of $\frak S(P,z_0,r)$, in the sense that if we transform the gauge---that is, we change the trivialization of $\Omega\times V\to \Omega$---and replace $P$  by $Q=K^*PK$, where $K\colon\Omega\to \GL(V)$ is holomorphic, then
\begin{equation}
\frak T(Q,z_0,r)=K (z_0)^*\frak T (P,z_0,r) K(z_0).
\end{equation}
Curvature (4.12) is also gauge covariant,
\begin{equation}
Q_{\overline z}Q^{-1} Q_z-Q_{z\overline z}=K^* (P_{\overline z} P^{-1} P_z-P_{z\overline z}) K.
\end{equation}
Like $\frak S$, $\frak T(P,z_0,r)$ makes sense when $P$ is defined only on $\partial D_r(z_0)$ and has some mild regularity properties.

\begin{lem}Suppose $0<k <1$ and $P\in C^k (\partial D_r (z_0)$, $\End^+ V$).
Then the minimum in (4.13) is achieved by an $H$ for which $H^*PH=\Id_V$ on $\partial D_r(z_0)$.
Thus $\frak T (P,z_0,r)=H^*(z_0)^{-1} H(z_0)^{-1}$.
\end{lem}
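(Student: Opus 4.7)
The plan is to produce an explicit minimizer by using the Dirichlet problem of Theorem 3.1 to trivialize $P$ on the boundary, and then to verify minimality via the seminegative-curvature mean value inequality of Theorem 4.2.

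First, I would construct the candidate $H$. Transporting Theorem 3.1 from the unit disc to $D_r(z_0)$ by an affine change of variables, apply it with $\cA=\End V$ and boundary data $F=P\in C^k(\partial D_r(z_0),\End^+V)$. This yields a holomorphic $K\colon D_r(z_0)\to\GL(V)$ such that $K^*K$ extends continuously to $\overline{D_r(z_0)}$ with boundary values $P$. Because $0<k<1$ is nonintegral, Theorem 3.7 upgrades this: $K$ itself belongs to $C^k(\overline{D_r(z_0)},\GL(V))$. Set $H=K^{-1}$; then $H\in C^k(\overline{D_r(z_0)},\GL(V))$, is holomorphic on $D_r(z_0)$, and on $\partial D_r(z_0)$ satisfies $H^*PH=K^{*-1}K^*KK^{-1}=\Id_V$.

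Second, I would evaluate the functional at this $H$. Since $H^*PH\equiv\Id_V$ on the circle $\partial D_r(z_0)$, the averages give $\oint H^*PH\,|dz|=\Id_V$ and $\oint H^*PH\,dz=\oint H^*PH\,d\bar z=0$ (because $\oint dz=\oint d\bar z=0$ on a circle). Hence $\frak S(H^*PH,z_0,r)=\Id_V$, so
\[
H(z_0)^{*-1}\frak S(H^*PH,z_0,r)H(z_0)^{-1}=H(z_0)^{*-1}H(z_0)^{-1},
\]
which is the value proposed as the minimum.

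Third, I would show this is a lower bound for every admissible $H'$. Form $L=H^{-1}H'$, which is holomorphic on $D_r(z_0)$, continuous on the closure, and takes values in $\GL(V)$. On $\partial D_r(z_0)$ one has $H'^*PH'=L^*(H^*PH)L=L^*L$, so $\frak S(H'^*PH',z_0,r)=\frak S(L^*L,z_0,r)$ since $\frak S$ depends only on boundary values. Because $L$ is holomorphic and invertible, $L^*L$ has zero curvature on $D_r(z_0)$, hence is seminegatively curved there. Apply Theorem 4.2 to $L^*L$ on each subdisc $\overline{D_{r'}(z_0)}$ with $r'<r$ (which lies in the open set $D_r(z_0)$ where seminegative curvature is available) to obtain $\frak S(L^*L,z_0,r')\ge L(z_0)^*L(z_0)$; then let $r'\nearrow r$, using continuity of $L$ on $\overline{D_r(z_0)}$ so that the circle averages at radius $r'$ converge to those at radius $r$. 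This yields $\frak S(L^*L,z_0,r)\ge L(z_0)^*L(z_0)=H'(z_0)^*H(z_0)^{*-1}H(z_0)^{-1}H'(z_0)$, and conjugating by $H'(z_0)^{-1}$ gives
\[
H'(z_0)^{*-1}\frak S(H'^*PH',z_0,r)H'(z_0)^{-1}\ge H(z_0)^{*-1}H(z_0)^{-1},
\]
which establishes that the minimum is attained by the $H$ constructed in the first step.

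The two delicate points, which I would flag as the main obstacles, are both boundary-regularity issues. The first is getting $K$ (and hence $H$) continuous up to $\partial D_r(z_0)$ so that $H^*PH=\Id_V$ holds identically, not merely almost everywhere; this is exactly what Theorem 3.7 delivers and is the reason for the nonintegrality assumption $0<k<1$. The second is that $L^*L$ is known to have zero curvature only on the open disc, so Theorem 4.2 cannot be applied directly at the full radius $r$; the approximation $r'\nearrow r$ combined with continuity of the circle averages bypasses this cleanly without needing any interior curvature estimate up to the boundary.
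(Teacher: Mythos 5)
Your proof is correct and follows essentially the same route as the paper: solve the Dirichlet problem (Theorems 3.1, 3.7) to trivialize $P$ on the boundary, then verify minimality by reducing any competitor to the zero-curvature case and invoking Theorem 4.2 — the paper phrases this reduction via the gauge-covariance identity (4.14) and the normalized case $P\equiv\Id_V$, whereas you unroll it by substituting $L=H^{-1}H'$, but it is the same computation. Your $r'\nearrow r$ limiting argument also cleanly justifies applying Theorem 4.2 at the full radius, a point the paper leaves implicit.
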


\begin{proof}Assume first that $P\equiv\Id_V$.
For any competing $H$ the curvature of $H^*H$ is $0$ on $D_r(z_0)$.
By Theorem 4.2
\[
H^* (z_0)^{-1}\frak S (H^*H,z_0,r) H(z_0)^{-1}\geq\Id_V,
\]
and equality holds if $H=\Id_V$.

Second, for a general $P$ by Theorems 3.1, 3.7 we can solve the Dirichlet problem
\[
(K^{-1})^* K^{-1}=P\quad\text{ on }\quad\partial D_r (z_0)
\]
for $K\in C^k (\overline{D_r(z_0)}, \GL(V))$ holomorphic on $D_r(z_0)$.
We then pass from $P$ to $K^*PK$ and apply covariance (4.14).
\end{proof}

\begin{thm}A function $P\in C_{\op}^2 (\Omega,\End^+ V)$ has semipositive curvature if and only if for every disc $\overline{D_r(z_0)}\subset\Omega$ 
\begin{equation}
\frak T(P,z_0,r)\leq P(z_0).
\end{equation}
\end{thm}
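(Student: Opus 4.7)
The plan is to combine the gauge covariances (4.14)--(4.15) with Lemma 4.6, Theorem 4.2, the duality between semipositive and seminegative curvature, and a short-radius expansion of $\frak T$.

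\textbf{Gauge reduction.} Given $\overline{D_r(z_0)}\subset\Omega$, Theorems 3.1 and 3.7 furnish a $K$ that is holomorphic on $D_r(z_0)$, $C^k$ up to the boundary, valued in $\GL(V)$, with $K^*PK|_{\partial D_r(z_0)}=\Id_V$. Set $Q=K^*PK$. By (4.14)--(4.15) both the target inequality and the semipositive-curvature condition on $P$ are equivalent to the corresponding statements for $Q$. Lemma 4.6, with $H\equiv\Id_V$ as the minimizer, gives $\frak T(Q,z_0,r)=\Id_V$, so the inequality reduces to $Q(z_0)\geq \Id_V$.

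\textbf{Forward direction.} Assume $Q$ has semipositive curvature. A direct $C^2$ computation from (1.2) shows that $Q^{-1}$ then has seminegative curvature. Theorem 4.2 applied to $Q^{-1}$ yields $\frak S(Q^{-1},z_0,r)\geq Q^{-1}(z_0)$. Since $Q^{-1}=\Id_V$ on $\partial D_r(z_0)$, we have $\oint Q^{-1}|dz|=\Id_V$ and $\oint Q^{-1}dz=\oint Q^{-1}d\overline z=0$, so $\frak S(Q^{-1},z_0,r)=\Id_V$. Hence $Q^{-1}(z_0)\leq \Id_V$, i.e.\ $Q(z_0)\geq \Id_V$.

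\textbf{Reverse direction.} Semipositive curvature is pointwise, so it suffices to verify $P_{\overline z}P^{-1}P_z-P_{z\overline z}\geq 0$ at an arbitrary $z_0$. Using (4.15), replace $P$ by $Q=K^*PK$ with the polynomial gauge $K(z)=P(z_0)^{-1/2}(\Id_V-A(z-z_0))$, $A$ chosen so that $Q(z_0)=\Id_V$ and $Q_z(z_0)=Q_{\overline z}(z_0)=0$; the hypothesis then says $\frak T(Q,z_0,r)\leq \Id_V$ for all small $r$. Expand
\[
Q(z)=\Id_V+Q_{z\overline z}(z_0)|z-z_0|^2+\text{Re}\bigl(Q_{zz}(z_0)(z-z_0)^2\bigr)+O(|z-z_0|^3)
\]
near $z_0$ and solve $H^*QH=\Id_V$ on $\partial D_r(z_0)$ by setting $H=\Id_V+h$ with $h$ holomorphic; to leading order the equation is $h+h^*=\Id_V-Q$ on the boundary, solvable by the Schwarz formula exactly as in the proof of Lemma 3.4. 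Normalizing $H(z_0)\in\End^+V$ selects the Lemma 4.6 minimizer and reads off $h(z_0)$ as the zero-frequency holomorphic constant of $-\tfrac12(Q-\Id_V)$, giving $h(z_0)=-\tfrac12 Q_{z\overline z}(z_0)r^2+O(r^3)$, whence
\[
\frak T(Q,z_0,r)=H(z_0)^{-1*}H(z_0)^{-1}=\Id_V+Q_{z\overline z}(z_0)r^2+O(r^3).
\]
The hypothesis forces $Q_{z\overline z}(z_0)\leq 0$, which in this gauge is $Q_{\overline z}(z_0)Q(z_0)^{-1}Q_z(z_0)-Q_{z\overline z}(z_0)\geq 0$, the semipositive-curvature condition at $z_0$; by (4.15) this transfers back to $P$.

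\textbf{Main obstacle.} The delicate step is the remainder bound in the expansion of $\frak T$. One has to check that the nonlinear contributions in $(\Id_V+h^*)(\Id_V+(Q-\Id_V))(\Id_V+h)=\Id_V$ perturb $h(z_0)$ only by $O(r^4)$, and that the Privalov-type bounds behind Schwarz's formula give uniform control of the relevant $C^k$-norms as $r\searrow 0$. This is essentially the implicit-function argument of Lemma 3.4, rescaled to the small disc.
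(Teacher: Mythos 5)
Your forward direction has a genuine error. You claim that if $Q$ has semipositive curvature then $Q^{-1}$ has seminegative curvature, but the ``direct $C^2$ computation from (1.2)'' does not give this. One finds
\[
(Q^{-1})_{z\overline z}-(Q^{-1})_{\overline z}\,Q\,(Q^{-1})_z \;=\; Q^{-1}\bigl(Q_z Q^{-1}Q_{\overline z}-Q_{z\overline z}\bigr)Q^{-1},
\]
so $Q^{-1}$ is seminegatively curved iff $Q_z Q^{-1}Q_{\overline z}\geq Q_{z\overline z}$, whereas $Q$ is semipositively curved iff $Q_{\overline z}Q^{-1}Q_z\geq Q_{z\overline z}$. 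Since $Q_{\overline z}=Q_z^*$, the two compare $Q_z^*Q^{-1}Q_z$ against $Q_zQ^{-1}Q_z^*$, and these are genuinely different self-adjoint operators when $\dim V>1$. Concretely, with $V=\bC^2$, $Q(z)=\Id+Cz+C^*\overline z+A|z|^2+\cdots$, $C=\bigl(\begin{smallmatrix}0&1\\0&0\end{smallmatrix}\bigr)$, and $A=C^*C-\varepsilon\Id$ for small $\varepsilon>0$, we have $C^*C-A=\varepsilon\Id\ge 0$ (so $Q$ is semipositive at $0$), but $CC^*-A=\mathrm{diag}(1+\varepsilon,\varepsilon-1)\not\geq 0$ (so $Q^{-1}$ is not seminegative at $0$). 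The $P\mapsto P^{-1}$ ``duality'' you invoke does not exchange semipositivity and seminegativity; the correct dual metric lives on $V^*$ and involves transposes, not bare inverses. The fix is to replace this step with the maximum principle, which is what the paper does: with $H$ from Lemma 4.6, $Q_0=H^{*-1}H^{-1}$ has zero curvature on $D_r(z_0)$ and agrees with $P$ on the boundary circle, so Theorem 1.3 (the identity map $(V,P)\to(V,Q_0)$ decreases curvature, $\log\|\Id\|$ is subharmonic and equals $0$ on $\partial D_r$) yields $Q_0\leq P$ on the disc; evaluating at $z_0$ gives (4.16). In your gauge this is the statement ``$Q$ semipositive and $Q|_{\partial D_r}=\Id$ imply $Q\geq\Id$,'' again a direct consequence of Theorem 1.3, not of any duality.

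Your reverse direction is essentially the paper's in spirit: both normalize so that $Q(z_0)=\Id_V$ and $Q_z(z_0)=Q_{\overline z}(z_0)=0$, expand $\frak T$ to order $r^2$, and read off $Q_{z\overline z}(z_0)\leq 0$. The difference is how the remainder is controlled. The paper takes $H_r$ from Lemma 4.6 and sandwiches $H_r^{*-1}H_r^{-1}$ between $(1\mp\var r^2)\Id+Ar^2$ on $\partial D_r$, then uses Lemma 3.2 to propagate the sandwich to the interior; this avoids any Privalov-type estimate and handles the remainder uniformly. You instead compute $h(z_0)$ explicitly from the Schwarz formula and then have to justify that the nonlinear correction is $o(r^2)$ (your ``main obstacle''). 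Both routes can be completed, but the paper's maximum-principle argument is cleaner and closes the gap you flag. Two small points: since $P$ is only assumed $C^2_{\op}$, the remainder in your expansion of $Q$ is $o(|z-z_0|^2)$, not $O(|z-z_0|^3)$, and you should say a word on why the quadratic gauge $K(z)=P(z_0)^{-1/2}(\Id-A(z-z_0))$ keeps $Q\in\End^+V$ near $z_0$ (it does, for $r$ small, but it should be noted).
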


\begin{proof}Suppose $P$ is semipositively curved, and choose $H\colon\overline{D_r(z_0)}\to \GL(V)$ as in Lemma 4.6.
Since $Q=H^{*-1} H^{-1}$ has zero curvature on $D_r(z_0)$ and $Q=P$ on $\partial D_r(z_0)$, the maximum principle (Theorem 1.3) implies 
\[
P(z_0)\geq H^* (z_0)^{-1} H(z_0)^{-1}=H^* (z_0)^{-1}\frak S (H^* PH,z_0,r) H(z_0)^{-1}\geq\frak T(P,z_0,r).
\]

Conversely, suppose (4.16) holds.
We need to prove $P_{\overline z}P^{-1} P_z\geq P_{z\overline z}$, that we will do at $z_0=0$.
First assume that with some $A=A^*\in\End\, V$
\begin{equation}
P(z)=\Id_V +A|z|^2 +o |z|^2\qquad\text{ as }z\to 0,
\end{equation}
and choose $H=H_r$ again as in Lemma 4.6.
We claim that
\[
\|\Id_V +Ar^2-H_r^* (z)^{-1} H_r (z)^{-1}\|=o(r^2)\quad\text{ as } |z|\leq r\to 0.
\]
Indeed, given $\var>0$, for sufficiently small $r$ and $z\in\partial D_r(0)$
\[
(1-\var r^2)\Id_V +Ar^2\leq H_r^* (z)^{-1} H_r(z)^{-1}\leq (1+\var r^2)\Id_V+Ar^2.
\]
By the maximum principle the same must then hold for $z\in\overline{D_r(0)}$.

But then
\[
P(0)=\Id_V\geq\frak T (P,z_0,r)=H_r^* (0)^{-1} H_r(0)^{-1}=\Id_V+Ar^2+o(r^2),
\]
and $P_{\overline z}(0)P^{-1}(0) P_z(0)-P_{z\overline z}(0)=-A\geq 0$ follows by letting $r\to 0$.

Now for a general $P$ we can choose a holomorphic $K\colon\Omega\to \GL(V)$ so that $P_1=K^*PK$ has Taylor series as in (4.17).
That $P$ has semipositive curvature then follows from the gauge covariance properties (4.14), (4.15) and from the first part of the proof.
\end{proof}

\section{Limits}

Nevertheless, there are limits to how far one can go and generalize results from traditional to noncommutative potential theory. Consider the case of Harnack's inequality: If $\Omega=\{z\in\bC\,:\,|z|<1\}$ and $u:\Omega\to[0,\infty)$ is harmonic, then
\begin{equation}
u(z)\le\frac{1+|z|}{1-|z|} u(0).
\end{equation}
When $u(0)=0$, we recover the maximum---or rather, minimum---principle, $u\equiv 0$. But (5.1) also implies that the maximum principle is {\sl stable}: Knowing how far $u(0)$ is from $\inf u$, we can estimate how far $u$ is from a constant function. This raises the following question of noncommutative potential theory.

Let $\Omega=\{z\in\bC\,:\,|z|<1\}$, and let $P\in C^2_{\op}(\Omega, \End^+V)$ satisfy $R^P=0$. Given that $P(z)\ge\Id$ for all $z\in\Omega$, is it possible to estimate $P(z)$ in terms of $P(0)$, in the form
\[
||P(z)||\le C\big(z,P(0)\big)?
\]

Such an estimate holds and follows from Harnack's inequality (5.1) if $\dim V<\infty$, but not otherwise:

\begin{thm} Suppose $\dim V=\infty$. With $\Omega=\{z\in\bC\,:\,|z|<1\}$ and $z_0\in\Omega\setminus\{0\}$, there is a $T\in\End^+V$ such that
\begin{equation}
\sup\{||P(z_0)||\,:\, P\in C^2_{\op}(\Omega, \End^+ V), \,P\ge\Id,\, R^P=0,\, P(0)=T\}=\infty.
\end{equation}
$T$ can be chosen a multiple of $\Id$.
\end{thm}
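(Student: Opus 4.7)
The plan is to exhibit, for $T=c\Id_V$ with $c>1/|z_0|^2$, a sequence of flat metrics $P_n\in C^2_{\op}(\Omega,\End^+V)$ satisfying $P_n(0)=T$, $P_n\ge\Id$, and $\|P_n(z_0)\|\to\infty$, which directly gives (5.2). Infinite-dimensionality of $V$ enters by picking an orthogonal decomposition $V=V_n\oplus V_n^\perp$ with $\dim V_n=n$ growing; on $V_n^\perp$ I put $P_n\equiv c\Id$, and reduce to constructing a flat $\tilde P_n=H_n^*H_n$ on $V_n$ with $\tilde P_n(0)=c\Id_{V_n}$, $\tilde P_n\ge\Id_{V_n}$, and $\|\tilde P_n(z_0)\|\to\infty$. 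In finite dimension the Harnack inequality (5.1) applied to the scalar non-negative harmonic function $\log\det\tilde P_n=2\log|\det H_n|$ gives only the bound $\|\tilde P_n(z_0)\|\le\det\tilde P_n(z_0)\le c^{nk}$ with $k=(1+|z_0|)/(1-|z_0|)$, which grows with $n$ and leaves room for the construction to saturate.

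The construction: fix a scalar Schur function $\phi$ on $\Omega$, nonvanishing, with $\phi(0)=c^{-1/2}$ and $|\phi(z_0)|=c^{-k/2}$ --- the scalar Schwarz--Pick extremal, given by $\phi=e^{-u-iv}$ where $u$ is the Poisson integral of a Dirac mass at the boundary point $z_0/|z_0|$ scaled by $(\log c)/2$. In an orthonormal basis $f_1,\ldots,f_n$ of $V_n$, set $G_n=H_n^{-1}$ to be upper triangular of the form
\[
G_n(z)=\phi(z)\Id_{V_n}+\sum_{1\le i<j\le n}\psi_{ij}(z)\,f_i\otimes f_j^{\,*},
\]
with holomorphic $\psi_{ij}$ chosen so that $\|G_n(z)\|\le 1$ on $\Omega$ and $\|G_n(z_0)\|$ is within a universal constant of $1$. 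Since $\det G_n=\phi^n$ is nonvanishing, $H_n=G_n^{-1}\in\GL(V_n)$ throughout $\Omega$; and since $\prod\sigma_i(G_n(z_0))=|\phi(z_0)|^n=c^{-nk/2}$ while the other $n-1$ singular values are at most $\|G_n(z_0)\|\sim 1$, the smallest singular value satisfies $\sigma_{\min}(G_n(z_0))\sim c^{-nk/2}$. Therefore $\|\tilde P_n(z_0)\|=\sigma_{\min}(G_n(z_0))^{-2}\sim c^{nk}\to\infty$. The case $n=2$ is an elementary sanity check: for $G_2=\bigl(\begin{smallmatrix}\phi & \psi \\ 0 & \phi\end{smallmatrix}\bigr)$ one has $\det G_2^*G_2=|\phi|^4$ and $\operatorname{tr}G_2^*G_2=2|\phi|^2+|\psi|^2$, and pushing $|\psi(z_0)|$ up to the admissible limit forces $\sigma_{\max}(G_2(z_0))\sim 1$, $\sigma_{\min}(G_2(z_0))\sim|\phi(z_0)|^2=c^{-k}$, and $\|\tilde P_2(z_0)\|\sim c^{2k}$.

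The main obstacle is constructing the off-diagonal entries $\psi_{ij}$ so that the coupled boundary constraint $\|G_n(\zeta)\|\le 1$ on $\partial\Omega$ --- roughly $\sum_{i<j}|\psi_{ij}(\zeta)|^2\le 1-|\phi(\zeta)|^2$ at a.e.\ $\zeta\in\partial\Omega$ --- is met while still achieving $\|G_n(z_0)\|\sim 1$. For the extremal $\phi$ above, $|\phi(\zeta)|$ vanishes only at $\zeta=z_0/|z_0|$ and equals $1$ a.e.\ elsewhere on $\partial\Omega$, so the admissible total boundary mass of $\sum|\psi_{ij}|^2$ is supported (in the limit) only where $\phi$ fails to be inner. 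I would make this rigorous by approximating $\phi$ by smoother scalar Schur functions whose boundary moduli are bounded away from $1$ on most of $\partial\Omega$, and solving a matrix Dirichlet problem via Theorem 3.1 with continuous boundary data built from these approximants; Theorem 3.1 then automatically produces the holomorphic outer factor $H_n\in\GL(V_n)$ on $\Omega$. The reason this construction is possible in the matrix (and hence infinite-dimensional) case but not in the scalar case is that the $\binom{n}{2}$ matrix off-diagonal degrees of freedom compound the scalar Harnack bound $c^k$ into $c^{nk}$, whose supremum over $n\to\infty$ is infinite.
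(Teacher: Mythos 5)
Your proposal takes a genuinely different route from the paper, but it contains a gap that you flag yourself and do not close, and I believe it is a real one.

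The paper derives Theorem 5.1 from Lemma 5.2, a failure of the noncommutative Hurwitz theorem: it exhibits holomorphic $H_k\colon\bC\to\GL(V)$ with $H_k(0)=\Id$, converging locally uniformly to an $H$ that is invertible at $0$ but not everywhere, and this is built on Kakutani's weighted-shift example of discontinuity of the spectrum. After rescaling one gets $L_k\colon\Omega\to\GL(V)$ with $\|L_k\|\le 1$, $L_k(0)=\var\Id$, $L_k\to L$ with $L(z_0)$ non-invertible; then $P_k=L_k^{*-1}L_k^{-1}$ are competitors in (5.2), $P_k(0)=\var^{-2}\Id$, and $\sup_k\|P_k(z_0)\|=\infty$ by Lemma 3.6. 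This is a spectral-theoretic phenomenon intrinsic to infinite dimensions, not a passage to the limit over growing finite blocks.

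Your reduction to finite-dimensional blocks $V_n\oplus V_n^\perp$ is fine, and the Harnack heuristic $\|\tilde P_n(z_0)\|\le\det\tilde P_n(z_0)\le c^{nk}$ correctly explains why the $\dim V<\infty$ bound degenerates as $n\to\infty$. But the construction of $\tilde P_n=H_n^*H_n$ that is supposed to saturate this bound is where things break. With your extremal $\phi$ (a singular inner function with a point mass at $z_0/|z_0|$) one has $|\phi(\zeta)|=1$ for a.e.\ $\zeta\in\partial\Omega$. The constraint $\|G_n(\zeta)\|\le1$ a.e.\ on $\partial\Omega$ then forces every off-diagonal entry $\psi_{ij}$ to vanish a.e.\ on $\partial\Omega$, and since each $\psi_{ij}\in H^\infty$, this forces $\psi_{ij}\equiv 0$. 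Then $G_n=\phi\Id_{V_n}$, $\sigma_{\min}(G_n(z_0))=|\phi(z_0)|=c^{-k/2}$, and $\|\tilde P_n(z_0)\|=c^k$ does not grow with $n$. Your $n=2$ sanity check implicitly assumes $|\psi(z_0)|$ can be pushed ``up to the admissible limit'' $\sim1$; with this $\phi$ the admissible limit is $0$, so the check is vacuous rather than confirming.

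The repair you sketch --- replace $\phi$ by a non-inner Schur function with $|\phi|<1$ off a small arc, then solve a matrix Dirichlet problem via Theorem 3.1 --- is not carried out, and it is not clear it can be. There is a genuine tension: the closer $\phi$ is to the Schwarz--Pick extremal (so that $|\phi(z_0)|$ is near $c^{-k/2}$), the more nearly inner it must be, hence the less room the boundary constraint leaves for the $\psi_{ij}$; conversely, making room for $\psi_{ij}$ pushes $|\phi(z_0)|$ up. Moreover, the $\psi_{ij}$ must be supported (on $\partial\Omega$) essentially where $|\phi|$ is small, which is the same small arc near $z_0/|z_0|$ that controls $|\phi(z_0)|$, and a holomorphic $\psi_{ij}$ that is $\lesssim\var$ off that arc and $\lesssim1$ on it has $|\psi_{ij}(z_0)|$ controlled by a two-constants (Nevanlinna) estimate that shrinks as the arc shrinks. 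None of this is analyzed, and the interaction of the $\binom n2$ couplings with the operator-norm constraint $\|G_n\|\le1$ (not merely the Hilbert--Schmidt surrogate $\sum|\psi_{ij}|^2\le1-|\phi|^2$) is also elided. As written, the argument does not establish that $\sigma_{\min}(G_n(z_0))\to0$, so (5.2) is not proved. The paper's route via Lemma 5.2 sidesteps all of this by exploiting the spectral discontinuity directly, without needing any quantitative extremal construction.
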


This we will derive from a lemma that disproves the noncommutative generalization of Hurwitz's theorem on zeros of holomorphic functions.

\begin{lem} If $\dim V=\infty$, there is a sequence of holomorphic $H_k:\bC\to\GL(V)\subset\End \,V$ with $H_k(0)=\Id$ and $\lim_{k\to\infty}H_k=H:\bC\to\End\, V$ locally uniformly, such that 
\[
\emptyset\neq\{\zeta\in\bC\,:\, H(\zeta)\in\GL(V)\}\neq\bC .
\]
Locally uniform convergence is understood with respect to the norm topology on $\End\, V$.
\end{lem}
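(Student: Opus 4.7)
The plan is to set $H(\zeta) = \Id + \zeta T$ and $H_k(\zeta) = \Id + \zeta T_k$, where $T \in \End V$ has a nonzero spectral value and the $T_k \in \End V$ are \emph{nilpotent} operators converging to $T$ in operator norm. With this choice $\zeta T_k$ is nilpotent for every $\zeta \in \bC$, so
\[
H_k(\zeta)^{-1} = \sum_{j=0}^{N_k-1} (-\zeta T_k)^j
\]
is a polynomial in $\zeta$ (with $T_k^{N_k} = 0$); hence $H_k \colon \bC \to \GL(V)$ is entire with $H_k(0) = \Id$, and
\[
\|H_k(\zeta) - H(\zeta)\| = |\zeta|\cdot\|T_k - T\| \to 0
\]
locally uniformly in $\zeta$. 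The operator $H(\zeta) = \Id + \zeta T$ lies in $\GL(V)$ precisely when $-1/\zeta \notin \sigma(T)$, so since $\sigma(T)$ has a nonzero point the set $\{\zeta : H(\zeta) \in \GL(V)\}$ is a proper subset of $\bC$, while $H(0) = \Id$ shows it is nonempty.

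The one nontrivial ingredient is the existence of such a $T$. A classical closure theorem for nilpotent operators (Herrero; cf.\ also Apostol--Foias--Voiculescu) states that, on an infinite-dimensional Hilbert space, a bounded $T$ lies in the operator-norm closure of the nilpotents if and only if its essential spectrum $\sigma_e(T)$ is connected, contains $0$, and the Fredholm index of $T - \lambda I$ vanishes for $\lambda \in \bC \setminus \sigma_e(T)$. A concrete choice meeting all these hypotheses is $V = L^2[0,1]$ and $T = M_x$, multiplication by the coordinate: then $T$ is self-adjoint with $\sigma(T) = \sigma_e(T) = [0,1]$, the closure conditions hold, and $H(\zeta)$ turns out to be non-invertible precisely on $(-\infty,-1] \subset \bR$.

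The main obstacle is invoking this closure theorem. In a self-contained exposition I would construct the $T_k$ by hand: decompose $V$ into a growing family of large finite-dimensional subspaces, write $T$ block-by-block in a compatible basis, and use the off-diagonal blocks to redistribute the positive traces of the diagonal pieces of $T$ into nilpotent shifts whose operator-norm error shrinks as the blocks grow. The finite-dimensional obstruction (a nilpotent matrix has zero trace, while each diagonal block of $T = M_x$ carries a positive trace) is dissolved precisely by the infinite dimension of $V$, which allows the trace to be absorbed through arbitrarily large supplementary blocks.
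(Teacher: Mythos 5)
Your proposal is correct and shares the paper's overall architecture---the linear pencil $H(\zeta)=\Id+\zeta T$ approximated by $H_k(\zeta)=\Id+\zeta T_k$ with $T_k$ nilpotent, so that each $H_k$ is everywhere invertible (via a finite Neumann series) while the limit $H$ degenerates on $-1/\sigma(T)$. The genuine difference is the key ingredient used to produce $T$ and the $T_k$: the paper reproduces Kakutani's concrete construction (a weighted shift on $l^2$ with weights $1/\beta(n)$, where $\beta(n)$ is the largest power of $2$ dividing $n$, together with explicitly truncated nilpotent approximants satisfying $\|A-A_k\|=2^{-k}$), and then verifies non-invertibility of $H(\zeta)$ for $|\zeta|\ge 2$ by an elementary growth estimate on the formal solution of $H(\zeta)x=(1,0,0,\ldots)$. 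You instead appeal to the Apostol--Foia\c s--Voiculescu/Herrero characterization of the norm closure of the nilpotents and pick $T=M_x$ on $L^2[0,1]$. Your route is conceptually cleaner and makes the phenomenon transparent (any self-adjoint $T$ with connected spectrum through $0$ will do), but it invokes a deep theorem whose proof is far longer than the lemma itself, whereas the paper's construction is fully self-contained and elementary. Two small points worth tightening if you pursue your version: the closure theorem also requires $\sigma(T)$ itself to be connected and contain $0$ (harmless here since $\sigma(T)=\sigma_e(T)$ for $M_x$, but it should be stated); and since the lemma is asserted for an arbitrary infinite-dimensional $V$, you should add, as the paper does, the remark that the construction on a separable model transfers to general $V$ via a splitting $V=L^2[0,1]\oplus W$. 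Your sketch of a ``by hand'' block-shift construction absorbing the trace obstruction is plausible but is not carried out; as written, your argument stands or falls with the AFV/Herrero theorem.
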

\begin{proof} At the bottom of this phenomenon is the fact that the set valued function that associates with an operator $A\in\End\, V$ its spectrum spec$\,A\subset\bC$ is discontinuous, when the space of compact subsets of $\bC$ is endowed with the Hausdorff metric (although the function is upper semicontinuous). A construction showing this is in [Ri, p. 282], attributed to Kakutani. We reproduce this construction, with a minimal change in notation. Consider on $V=l^2$ the weighted shift operator $A$ that maps $x=(x_n)_{n\ge1}\in l^2$ to
\[
\big(0,x_1,\frac{x_2}2,x_3,\frac{x_4}4,x_5,\frac{x_6}2, x_7,\frac{x_8}8,\ldots\big) .
\]
If $\beta(n)$ denotes the highest power of $2$ that divides $n\ge 1$, then
\[
Ax=y,\qquad \text{where }\quad y_n=\begin{cases}0 &\text{if } n=1\\
x_{n-1}/\beta(n-1) & \text{otherwise.}
\end{cases} 
\] 
Further, for $k=1,2,\ldots$ let $A_k\in\End\, V$ be given by
\[
A_kx=y,\qquad \text{where }\quad y_n=\begin{cases}0 &\text{if } 2^k \text{ divides } n-1\\
x_{n-1}/\beta(n-1) & \text{otherwise.}
\end{cases} 
\]
Then $||A-A_k||=2^{-k}$, and $A_k\to A$. Further, $A_k$ is nilpotent, so for all $\zeta\in\bC$
\[
H_k(\zeta)=\Id-\zeta A_k
\]
has an inverse, namely $\sum_{j\le2^k}(\zeta A_k)^j$. However, $H(\zeta)=\lim_k H_k(\zeta)$ is not invertible if $|\zeta|\ge 2$, it is not even onto (while $H(0)=\Id$).

Indeed, suppose $H(\zeta)x=(1,0,0,\ldots)$. This means $x_1=1$ and $x_n=\zeta x_{n-1}/\beta(n-1)$ for $n\ge 2$, i.e., 
\[
x_n=\zeta^{n-1}/\prod_{m<n}\beta(m).
\]
When $n=2^k$, the product is easy to compute. Given $j=0,1,\ldots,k-1$, the equation $\beta(m)=2^j$ has $2^{k-j-1}$ many solutions $1\le m<2^k$. Hence
\[
\prod_{m<n}\beta(m)=\prod_{j=0}^{k-1}2^{j2^{k-j-1}}=2^{2^k\sum_0^{k-1}j2^{-j-1}}<2^n.
\]
Therefore $|x_n|>1/2$ if $|\zeta|\ge2$ and $n$ is a power of $2$, whence $(x_n)_{n\ge 1}\notin l^2$.

This construction for $V=l^2$ has an obvious extension to any infinite dimensional $V$ via a splitting $V=l^2\oplus W$.
\end{proof}

\begin{proof}[Proof of Theorem 5.1] 
By scaling the dependent and independent variables of $H_k, H$ of Lemma 5.2 we can construct a sequence $L_k:\Omega\to\GL(V)$ of holomorphic maps, $||L_k(z)||\le 1$ for $z\in\Omega$, such that $L_k(0)=\var\Id$ with $\var>0$ independent of $k$, and $L_k\to L$ uniformly, but  $L(z_0)$ is not invertible. Then $P_k=L_k^{*-1}L_k^{-1}$ are competitors in (5.2) if $T=\var^{-2}\Id$, and  $\sup_k||P_k(z_0)||=\sup_k||L_k(z_0)^{-1}||^2=\infty$, for otherwise $L(z_0)$ would be invertible by Lemma 3.6.
\end{proof}

\end{document}